\newcommand{\mz}{\ensuremath{\mathbb Z}}
\newcommand{\mq}{\ensuremath{\mathbb Q}}
\newcommand{\mc}{\ensuremath{\mathbb C}}
\newcommand{\mf}{\ensuremath{\mathbb F}}
\newcommand{\mymod}{\ensuremath{\negthickspace \negmedspace \pmod}}
\newcommand{\shortmod}{\ensuremath{\negthickspace \negthickspace \negthickspace \pmod}}
\newcommand{\onehalf}{\ensuremath{ \frac{1}{2}}}
\newcommand{\half}{\ensuremath{ \frac{1}{2}}}
\newcommand{\thalf}{\tfrac{1}{2}}
\newcommand{\notdiv}{\ensuremath{\nmid}}
\theoremstyle{plain}		
	\newtheorem{mytheo}{Theorem}[section]
	\newtheorem{myprop}[mytheo]{Proposition}
	\newtheorem{mycoro}[mytheo]{Corollary}
     \newtheorem{mylemma}[mytheo]{Lemma}
	\newtheorem{mydefi}[mytheo]{Definition}
	\newtheorem{myconj}[mytheo]{Conjecture}
\theoremstyle{remark}
\numberwithin{equation}{section}
\begin{document}

\title[Moments of critical values of elliptic curves]{Moments of the critical values of families of elliptic curves, with applications}
\today
\author{Matthew P. Young} 
\address{Department of Mathematics
Texas A\&M University,
College Station, TX 77843-3368 }
\email{myoung@math.tamu.edu}
\thanks{This research was supported by an NSF Mathematical Sciences Post-Doctoral Fellowship and by the American Institute of Mathematics.}
\begin{abstract}
We make conjectures on the moments of the central values of the family of all elliptic curves and on the moments of the first derivative of the central values of a large family of positive rank curves.  In both cases the order of magnitude is the same as that of the moments of the central values of an orthogonal family of $L$-functions.  Notably, we predict that the critical values of all rank $1$ elliptic curves is logarithmically larger than the rank $1$ curves in the positive rank family.  

Furthermore, as arithmetical applications we make a conjecture on the distribution of $a_p$'s amongst all rank $2$ elliptic curves, and also show how the Riemann hypothesis can be deduced from sufficient knowledge of the first moment of the positive rank family (based on an idea of Iwaniec).
\end{abstract}
\maketitle

\section{Introduction}
Recently there have been many advances in the study of ranks of elliptic curves arising from random matrix theory.  For instance, Conrey {\em et al} \cite{CKRS} have studied many interesting statistics of the family of quadratic twists of a fixed elliptic curve.  In particular, they make a precise conjecture on the relative frequency of quadratic twists of rank two where the comparison is between the sets of curves twisted by integers that are quadratic residues $\pmod{p}$ and those that are quadratic nonresidues $\pmod{p}$.  This conjecture is deduced from a general moment conjecture on the central values of the families of interest \cite{CFKRS} combined with random matrix theory heuristics developed by Keating and Snaith \cite{KS}.

In this paper we study the analogous problems for the family of all rational elliptic curves.  That is, we make a conjecture on the moments of the central values of this family, using the general recipe presented in \cite{CFKRS}.  

In general such a conjecture for a family of $L$-functions has its gross order of magnitude determined only by the symmetry type of the family.  For example, for an orthogonal family $\mathcal{F}$ the general conjecture is
\begin{equation}
\frac{1}{|\mathcal{F}(X)|} \sum_{f \in \mathcal{F}(X)} L(\thalf, f)^k \sim a_k g_k (\log{X})^{\frac{k(k-1)}{2}},
\end{equation}
where it is understood that $\mathcal{F}(X)$ is a subset of $\mathcal{F}$ with conductors $\ll X$ and the asymptotic holds as $X \rightarrow \infty$.  Here $a_k$ is called the arithmetical factor and $g_k$ is a constant arising from random matrix theory.  

We also study the moments of the first derivative of the $L$-functions at the central point for a positive rank family $\mathcal{F}'$.  It is perhaps not obvious what to expect for a family $\mathcal{F}'$ where the central values should typically vanish to order one or two (depending on the sign in the functional equation).  We predict that the order of magnitude of the $k$-th moment of $L'(\thalf, E)$ where $E$ ranges over $\mathcal{F}'$ is the same as that given above for an orthogonal family.  This conjecture lends evidence to the idea that the family $\mathcal{F}'$ should be modeled by an orthogonal family with the caveat that one should `add' one zero to the central point (the `independent' model; see Conjecture 1.1 of \cite{Miller} and the article \cite{Farmer}). 
 

As an application of the moment conjecture we predict the relative proportion of rank $2$ elliptic curves whose coefficients $a$ and $b$ of the Weierstrass equation $y^2 = x^3 + ax + b$ lie in prescribed arithmetic progressions $\pmod{q}$, similarly to the work of \cite{CKRS}.  This gives a large number of conjectures that have the attractive feature of potentially being tested numerically, based on the strikingly good agreement that was seen with the quadratic twist families considered by \cite{CKRS}.  

The families of elliptic curves investigated in this paper (especially the positive rank family) can be thought of as somewhat exotic tests of the general moment conjectures of \cite{CFKRS}.

The arithmetical constant for the family of all elliptic curves is more subtle than for other families previously considered as it depends on the traces of the Hecke operators acting on the space of weight $k$ cusp forms for the full modular group.  In previous examples (Riemann zeta, families of Dirichlet $L$-functions, weight $k$ level $N$ newforms, to name a few), the arithmetical factor was essentially given in terms of rational functions in $p$.  A large part of this paper is the computation of the arithmetical factor for our family.  The key to this computation is a useful formula for the orthogonality relation for the family of all elliptic curves, which we compute with Proposition \ref{prop:Q*formula}; the arithmetical factor is essentially the Dirichlet series constructed from the orthogonality relation.  In the case of the positive rank family there does not appear to be as nice a formula for the arithmetical factor as there is for the family of all elliptic curves.

The methods of this paper can be easily modified to obtain similar moment conjectures for other families of elliptic curves.  However, the computation of the arithmetical factor in terms of easily computable `extrinsic' (non-tautological) quantities is not easily generalized to other families (see the remarks after Conjecture \ref{conj:momentfirstterm'} for a more precise discussion of what is meant here).

There are a variety of ways to order elliptic curves: by conductor, by absolute, minimal discriminant, or by taking coefficients in the Weierstrass equation to lie in a box.  Furthermore, there is the question of whether to count by isomorphism class or by isogeny class (put another way: is the family composed of curves or by $L$-functions?).  However, there is reason to believe that almost every isogeny class contains only one isomorphism class; Watkins briefly touches on this issue (\cite{Watkins}, section 5).  

We have ordered our curves by taking the coefficients to lie in a box for a practical reason: it is possible to do explicit computations with this ordering.  It may be most natural to order curves by conductor, but it is difficult to work with this ordering.  Recently, Watkins \cite{Watkins} has developed various heuristics that, amongst other things, allows one to get some handle on the ordering by conductor by way of the ordering in boxes.  It would be interesting to compute the orthogonality relation for the family of elliptic curves ordered by conductor.  Ordering by boxes is particularly pleasant because of periodicity of the Dirichlet series coefficients.

\subsection{Notation and definitions}
Let $E_{a,b}$ be the elliptic curve over $\mq$ given by the Weierstrass equation
\begin{equation}
\label{eq:weierstrass}
E_{a,b}: y^2 = x^3 + ax + b,
\end{equation}
with discriminant $\Delta  =\Delta_{a,b}= -16(4a^3 + 27b^2) \neq 0$ and conductor $N$.
For integers $r$, $t$, and squarefree $q$ coprime with $6$, and parameter $X >0$ we take the family $\mathcal{F}(X) = \mathcal{F}_{r,t}(X)$ defined by
\begin{equation}
\mathcal{F}(X)  = \{E_{a,b} : 
a \equiv r \mymod{6q}, \;
b \equiv t \mymod{6q}, \;
|a| \leq X^{1/3}, \;
|b| \leq X^{1/2}, \;
p^4 | a \Rightarrow p^6 \nmid b \}.
\end{equation}
We also suppose $(4r^3 + 27t^2, 6q) = 1$, so in particular $(3,r) = (2,t)=1$, and $(\Delta_{a,b},3q) = 1$ for $E_{a,b} \in \mathcal{F}(X)$.  In Section \ref{section:background} we review some basic facts about elliptic curves, and develop some of the properties of the curves in the family $\mathcal{F}(X)$.  Occasionally we write $\mathcal{F}^+$ to denote the set of $E \in \mathcal{F}$ with root number $w_E = +1$.

Our positive rank family $\mathcal{F}'$ is defined by
\begin{equation}
\mathcal{F}'(X) = \{E_{a,b^2} : 
a \equiv r \mymod{6}, \;
b \equiv t \mymod{6}, \;
|a| \leq X^{1/3}, \;
|b| \leq X^{1/4}, \;
p^4 | a \Rightarrow p^3 \nmid b \}.
\end{equation}
We could also take $q$, $r$, and $t$ as in the definition of $\mathcal{F}$ to analyze the behavior of $a$ and $b$ in arithmetic progressions, but have taken $q=1$ for simplicity.

Each curve $E_{a,b^2}$ has the point $(0, b)$ which is almost always of infinite order (see Theorem \ref{theorem:L-N} and subsequent remarks).
The Birch and Swinnerton-Dyer conjecture therefore predicts that $L(\thalf, E_{a,b^2}) = 0$ for almost all $a$ and $b$.
In addition, the sign in the functional equation for this family is expected to be evenly distributed between $\pm 1$ (see Proposition \ref{prop:rootnumber} below).  

Let $G(s)$ be the Barnes $G$-function, which satisfies $G(1) = 1$ and $G(s +1) = \Gamma(s) G(s)$.  The $k$-th Chebyshev polynomial of the second kind is denoted by $U_k$ and $Tr_{l}(p)$ is the trace of the Hecke operator $T_p$ acting on the space of weight $l$ cusp forms on the full modular group.  We let $Tr_{l}^*(p)$ be the `scaled' trace determined by $Tr_l(p) = p^{\frac{l-1}{2}} Tr_{l}^*(p)$. We let $d\mu_{ST}$ be the Sato-Tate measure, i.e.
\begin{equation}
\int f \, d\mu_{ST} := \frac{2}{\pi} \int_0^{\pi} f(\theta) \sin^2\theta d\theta.
\end{equation}

\subsection{Moment conjectures for the family of all elliptic curves}
We now state
\begin{myconj} 
\label{conj:momentfirstterm}
For any $k \in \mc$ such that $\text{Re } k > -\half$,
\begin{equation}
\label{eq:momentfirstterm2}
\frac{1}{|\mathcal{F}(X)|} \sum_{E \in \mathcal{F}(X)} L(\thalf, E)^k \sim \thalf a_k g_k (\log{X})^{\frac{k(k-1)}{2}}
\end{equation}
holds as $X \rightarrow \infty$, where
\begin{equation}
g_k = 2^{k/2} \frac{ G(1+k) \sqrt{\Gamma(1+2k)}}{\sqrt{G(1 + 2k) \Gamma(1+k)}}
\end{equation}
is a certain constant familiar from random matrix theory, and $a_k$ is an arithmetical factor given by an explicit absolutely convergent Euler product (for which see \eqref{eq:ak}).

\end{myconj}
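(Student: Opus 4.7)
My plan is to derive \eqref{eq:momentfirstterm2} heuristically via the Conrey-Farmer-Keating-Rubinstein-Snaith recipe \cite{CFKRS}, which is by now a standard framework for orthogonal families. The ingredient specific to this paper is the orthogonality relation of Proposition \ref{prop:Q*formula}, which plays the role that orthogonality of Dirichlet characters plays in the analogous computation for quadratic-twist families.

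First I would replace $L(\thalf, E)^k$ by the shifted product $\prod_{j=1}^k L(\thalf + \alpha_j, E)$ and open each factor via its approximate functional equation. For $E \in \mathcal{F}^+(X)$ this expresses the product as a sum over integers $n$ of a generalized divisor function in the coefficients $\lambda_E(n)$, weighted by a smooth cutoff of size $\sim\sqrt{\text{cond}(E)/n}$. Averaging termwise over $\mathcal{F}(X)$, Proposition \ref{prop:Q*formula} supplies $|\mathcal{F}(X)|^{-1}\sum_E \lambda_E(n)$ as a multiplicative function of $n$ whose Euler factor at each prime $p$ is built from the scaled Hecke-trace data $Tr_{l}^*(p)$, Chebyshev polynomials $U_k$, and integration against the Sato-Tate measure $d\mu_{ST}$.

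Following the recipe, the diagonal contribution together with the $2^k$ ``shift-and-swap'' contributions from the functional equation assemble into an expression of the form
\[
\sum_{\varepsilon \in \{\pm 1\}^k} X^{\sum_j (\varepsilon_j - 1)\alpha_j/2}\, A_k(\varepsilon\cdot \alpha)\, G_k(\varepsilon\cdot \alpha),
\]
where $G_k$ is the universal Gamma-factor piece familiar from any orthogonal family and $A_k$ is the absolutely convergent Euler product built from the local orthogonality data. Passing to the $\alpha \to 0$ limit via the Keating-Snaith residue computation \cite{KS} isolates the leading power $(\log X)^{k(k-1)/2}$ together with the constant $g_k$, and one reads off $a_k := A_k(0)$, the Euler product recorded in \eqref{eq:ak}. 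The prefactor $\thalf$ arises because, for $\text{Re } k > 0$, $L(\thalf, E)^k$ vanishes on the half of $\mathcal{F}(X)$ with root number $-1$, so the sum effectively runs over $\mathcal{F}^+(X)$, whose density in $\mathcal{F}(X)$ is $\thalf$.

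The main obstacle is the explicit evaluation of the Euler factors of $A_k$ in a usable form: unlike the families previously treated (Dirichlet $L$-functions, fixed-level newforms, and so on), the coefficients here are not rational functions in $p$ but involve the arithmetic invariants $Tr_{l}^*(p)$ coming from Hecke operators on cusp forms for the full modular group, so packaging them into a clean closed form requires Proposition \ref{prop:Q*formula} together with Chebyshev/Sato-Tate identities to recognise the local sums as moments of the form $\int U_k(\cos\theta)\, d\mu_{ST}$. As is standard in CFKRS-style derivations, a secondary heuristic issue is justifying that the off-diagonal terms and the error terms from the approximate functional equation are genuinely of lower order; this is taken on faith within the recipe and is not rigorously established here.
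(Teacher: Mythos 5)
Your proposal follows essentially the same route as the paper: shift to $\prod_j L(\thalf + \alpha_j, E)$, open each factor via an approximate functional equation, average termwise using the multiplicative orthogonality data of Lemma \ref{lem:brain} and the explicit evaluation in Proposition \ref{prop:Q*formula}, assemble the $\varepsilon \in \{\pm1\}^k$ ``shift-and-swap'' sum via Lemma 2.5.2 of \cite{CFKRS}, and extract the leading term $a_k g_k (\log X)^{k(k-1)/2}$ with the factor $\thalf$ from the root-number distribution as $\alpha_j \to 0$. This matches the paper's derivation in Sections \ref{section:deriving} and \ref{section:arithmeticalfactor}, and your diagnosis of the main nonstandard difficulty (the Hecke-trace, Chebyshev, and Sato--Tate structure of the Euler factors) is also the one the paper emphasizes.
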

Remarks.  We use the convention that $0^k = 0$ for any $k$ (alternatively, one could only sum over nonzero central values).
The restriction to $\text{Re } k > -\half$  arises because the Barnes $G$-function has its rightmost pole at $k=-\half$.  

For integral $k \geq 1$ this conjecture is a special case of the more precise Conjecture \ref{conj:Pkconj}.   Conjecture \ref{conj:momentfirstterm} is somewhat simpler and also provides an analytic continuation of $a_k$ to complex $k$ which is a necessary ingredient for deriving Conjecture \ref{conj:rank2}.  By taking $k=0$ and computing that $a_0 = g_0 = 1$ we obtain
\begin{mycoro}
Conjecture \ref{conj:momentfirstterm} implies the average rank of the family of all elliptic curves $\mathcal{F}$ is $\half$.
\end{mycoro}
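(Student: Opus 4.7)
The strategy is to evaluate Conjecture \ref{conj:momentfirstterm} at the single value $k=0$ and interpret both sides. Under the stated convention $0^k=0$, the moment $L(\thalf,E)^0$ equals $1$ when $L(\thalf,E)\neq 0$ and $0$ otherwise, so
$$\frac{1}{|\mathcal{F}(X)|}\sum_{E\in\mathcal{F}(X)}L(\thalf,E)^0 \;=\; \frac{|\{E\in\mathcal{F}(X):L(\thalf,E)\neq 0\}|}{|\mathcal{F}(X)|}.$$
Since the hypothesis $\text{Re}\,k>-\thalf$ is satisfied at $k=0$, the conjecture applies directly.

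Next I would check that the two constants on the right collapse to $1$ at $k=0$. For $g_k$ this is immediate from $G(1)=\Gamma(1)=1$ in the defining formula. For the arithmetic factor $a_k$ one substitutes $k=0$ into the explicit Euler product \eqref{eq:ak}; since the $k=0$ value of the recipe's diagonal sum is trivially $1$, each local factor is forced to evaluate to $1$ at $k=0$, and this is a routine prime-by-prime verification once the product is written out. With $a_0=g_0=1$, Conjecture \ref{conj:momentfirstterm} asserts that the proportion of $E\in\mathcal{F}(X)$ with $L(\thalf,E)\neq 0$ tends to $\thalf$.

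Consequently half of the family has $L(\thalf,E)=0$ and hence analytic rank $\geq 1$. This matches the expected root number statistics: by Proposition \ref{prop:rootnumber}, $w_E=\pm1$ with equal density, and the functional equation forces $L(\thalf,E)=0$ whenever $w_E=-1$. Thus the conjecture is equivalent to saying that every curve with $w_E=+1$ has nonzero central value (analytic rank $0$) while every curve with $w_E=-1$ has analytic rank $\geq 1$, giving the lower bound ``average analytic rank $\geq \thalf$'' without further work.

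The main obstacle is the matching upper bound: one has to know that the density of curves with analytic rank $\geq 2$ is zero, so that the rank-$\geq 1$ half contributes exactly $1$ on average rather than more. This is not a formal consequence of Conjecture \ref{conj:momentfirstterm} alone; it is the standard heuristic expectation that double zeros at the central point form a sparse subfamily, consistent with the $L'(\thalf,E)$ moment conjectures on $\mathcal{F}'$ discussed later in the paper. Granting this sparsity (and BSD to identify analytic with algebraic rank), the average rank equals $0\cdot\thalf+1\cdot\thalf=\thalf$, as claimed.
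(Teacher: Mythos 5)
Your $k=0$ computation is the same as the paper's: evaluate both sides of Conjecture~\ref{conj:momentfirstterm} at $k=0$, using the convention $0^0=0$ to read the left side as the density of curves with $L(\thalf,E)\neq 0$, and check $a_0=g_0=1$. The paper's own proof is exactly this one line.

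The useful part of your proposal is the final paragraph, and you are right to flag it. Conjecture~\ref{conj:momentfirstterm} at $k=0$ only gives that the density of curves with analytic rank $\geq 1$ is $\thalf$. Passing from this to ``average rank $=\thalf$'' requires that the higher-rank curves contribute nothing to the average, i.e.\ $\sum_{r\geq 2} r\cdot(\text{density of rank }r)=0$. That does not follow formally from a single moment at $k=0$; one needs (at minimum) that rank $\geq 2$ occurs with density zero and that large ranks are rare enough not to skew the average. The paper elides this, treating the corollary as an illustrative remark that tacitly invokes the standard minimalist heuristic (consistent with, but not derivable from, the $\mathcal{F}'$ moment conjectures for $L'$). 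So: your route is the same as the paper's where the paper says anything at all, and you are more precise than the paper about the extra input required. One small quibble: you do not need the root-number argument to get the lower bound $\geq\thalf$ on the average rank; ``half have rank $\geq 1$'' plus nonnegativity of rank already gives it. The root-number discussion is relevant background (it explains \emph{which} half vanishes and is why almost every even-sign curve must have rank exactly $0$), but it is not load-bearing for the bound as you state it.
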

We present this corollary simply to illustrate the strength of Conjecture \ref{conj:momentfirstterm} and the usefulness of extending the formulas to more general $k$ than positive integers.  This result also indicates that it will be difficult to check Conjecture \ref{conj:momentfirstterm} numerically, because of the well-known disparity between the expected proportion of rank $2$ elliptic curves and the numerical evidence; see \cite{BMSW} for a recent survey on this fascinating problem.  It may be that the large value distribution converges more quickly, so that there may be better numerical agreement for (slightly) larger $k$; this deserves investigation.  In any case, one should include all lower-order terms in the conjectured asymptotic.  

Our most precise moment conjecture is as follows.
\begin{myconj}
\label{conj:Pkconj}
Let $k$ be a nonnegative integer.  Then for some $\delta > 0$,
\begin{equation}
\sum_{E \in \mathcal{F}(X)} L(\thalf, E)^k 
= \half \sum_{E \in \mathcal{F}(X)} P_k(N_E) ( 1 + O(N_E^{-\delta})),
\end{equation}
where 
\begin{multline}
\label{eq:PkN}
P_k(N) = \\
\frac{(-1)^{\frac{k(k-1)}{2}} 2^{k}}{k! } \frac{1}{(2\pi i)^k} \oint \cdots \oint H(z_1, \ldots, z_k) 
\frac{\Delta(z_1^2, \ldots, z_k^2)^2}{\prod_{i=1}^k z_i^{2k-1}} 
\prod_{i = 1}^k X_N^{-\half}(\thalf + z_i)
dz_1 \cdots dz_k,
\end{multline}
and $X_N(s) = X_E(s)$ is such that $L(s,E) = w_E X_E(s) L(1-s,E)$ (see \eqref{eq:X}). Here
\begin{equation}
H(z_1, \ldots, z_k) = A_k (z_1, \ldots, z_k) \prod_{1 \leq i < j \leq k} \zeta(1 + z_i + z_j),
\end{equation}
where the arithmetical factor $A_k$ is holomorphic and nonzero in a neighborhood of $(0, \ldots, 0)$.  Here $P_k(N)$ is a polynomial in $\log{N}$ of degree $k(k-1)/2$.

\end{myconj}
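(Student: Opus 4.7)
The plan is to derive the conjecture via the Conrey--Farmer--Keating--Rubinstein--Snaith recipe of \cite{CFKRS} adapted to $\mathcal{F}(X)$. First, for complex shifts $z_1, \ldots, z_k$ in a neighborhood of the origin, I would introduce the shifted product $\prod_{i=1}^k L(\half + z_i, E)$ and apply the approximate functional equation to each factor, writing $L(\half + z_i, E)$ as a main Dirichlet sum plus $w_E X_E(\half + z_i)^{-1}$ times the dual sum with shift $-z_i$. Expanding this product yields $2^k$ pieces indexed by subsets $S \subseteq \{1, \ldots, k\}$ specifying which shifts are swapped, each carrying a factor $w_E^{|S|}$ multiplied into a Dirichlet series in the Hecke eigenvalues $\lambda_E(n_1) \cdots \lambda_E(n_k)$.

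Next I would average over $E \in \mathcal{F}(X)$. The sign $w_E$ averages to $\half$, producing the overall factor of $\half$ in front of $P_k$, and the orthogonality relation from Proposition \ref{prop:Q*formula} collapses $\sum_E \lambda_E(n_1) \cdots \lambda_E(n_k)$ to a multiplicative arithmetic function whose local factors are expressible through the traces $Tr_l(p)$. The resulting Dirichlet series in the shifted variables factors formally as $\prod_{i < j} \zeta(1 + z_i^{(S)} + z_j^{(S)})$ (with $z^{(S)}$ denoting the flipped shifts) times a holomorphic Euler product; the $\zeta$-factors arise from the diagonal contributions $n_i = n_j$ at each prime, while the holomorphic factor encodes the finer arithmetic of $Tr_l(p)$.

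With the $2^k$ pieces assembled, I would invoke the standard CFKRS combinatorial identity that packages the sum over $S$ into the single $k$-fold contour integral in \eqref{eq:PkN}. The factor $\Delta(z_1^2, \ldots, z_k^2)^2 / \prod_i z_i^{2k-1}$ reflects the orthogonal symmetry type and enforces the invariance under $z_i \mapsto -z_i$ coming from the swap structure; the arithmetical factor $A_k(z_1, \ldots, z_k)$ is then read off as the common holomorphic piece after the $\binom{k}{2}$ zeta poles have been pulled out, and its holomorphy and nonvanishing in a neighborhood of the origin follow from absolute convergence of the remaining Euler product. The claim that $P_k(N)$ is a polynomial in $\log N$ of degree $k(k-1)/2$ is then a residue computation at $z_1 = \cdots = z_k = 0$: the integrand has a pole of order $2k-1$ at each $z_i$ from the denominator and $\binom{k}{2}$ additional simple poles from the zeta factors, while the expansion $X_N^{-\half}(\half + z_i) = 1 + \half z_i \log N + O(z_i^2)$ contributes the $\binom{k}{2}$ logarithms that survive after extracting residues.

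The main obstacle is establishing holomorphy and the explicit Euler product structure of $A_k$ from the orthogonality relation, since the local factors involve $Tr_l(p)$ rather than rational functions of $p$. This is precisely the novel feature of $\mathcal{F}(X)$ relative to classical families and is where Proposition \ref{prop:Q*formula} does the real work; in particular, one must verify that the combination of trace values at successively higher weights conspires to cancel the apparent poles on $\text{Re}(z_i + z_j) = 0$ beyond the single zeta factor extracted. A secondary difficulty is obtaining the uniform error $O(N_E^{-\delta})$, which requires a power-saving form of the orthogonality relation together with a careful truncation of the approximate functional equation; this is routine in principle but needs attention because $\mathcal{F}(X)$ is ordered by box coefficients rather than by conductor.
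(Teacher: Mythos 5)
Your proposal follows the same CFKRS recipe as the paper: approximate functional equation for each shifted $L$-value, averaging of $\lambda_E(n_1)\cdots\lambda_E(n_k)$ via the orthogonality relation of Proposition \ref{prop:Q*formula} (the paper's Lemma \ref{lem:brain}), extraction of the $\binom{k}{2}$ zeta factors from the diagonal behavior $Q^*(p,p,1,\ldots,1)=1-p^{-1}$, and the combinatorial lemma of \cite{CFKRS} packaging the permutation sum into the contour integral.

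The one substantive slip is the assertion that ``the sign $w_E$ averages to $\half$.'' It averages to $0$: the root number is expected to be equidistributed between $\pm 1$ (cf.\ Proposition \ref{prop:rootnumber}). In the recipe, after symmetrizing with the $X_E^{-\half}$ factors each of the $2^k$ swap terms carries $w_E^{|S|}$; averaging then \emph{kills} the odd-$|S|$ terms and leaves the $2^{k-1}$ even-parity ones, which is exactly the sum $M_E(\alpha_1,\ldots,\alpha_k)=\sum_{\epsilon_1\cdots\epsilon_k=1}N_E(\epsilon_1\alpha_1,\ldots,\epsilon_k\alpha_k)$ that the paper feeds into Lemma 2.5.2 of \cite{CFKRS}. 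The factor $\half$ in front of $P_k$ is then the ratio $2^{k-1}/2^k$ between the prefactor of $M_E$ and the normalization of $P_k$ (equivalently, the paper's heuristic that about half of the curves have $w_E=-1$ and contribute $0$ to $\sum_E L(\half,E)^k$). Substituting $\half$ for $w_E$ as your sketch literally says would weight each swap pattern by $(1/2)^{|S|}$, which is not the recipe and would not fit the combinatorial lemma. A secondary, smaller point: your pole-counting for the degree omits the zeros of $\Delta(z_1^2,\ldots,z_k^2)^2$, which cancel most of the $\prod_i z_i^{2k-1}$ poles; the net degree $\binom{k}{2}$ you state is correct, but the intermediate bookkeeping as written is off.
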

Of course this formula is analogous to Conjectures 1.5.3 and 1.5.5 of \cite{CFKRS}, the only essential difference being the exact form of the arithmetical factor.  Due to the size of the formulas we have delayed the precise formulation of the arithmetical factors; see Proposition \ref{prop:Ak}.

The factor $\half$ appears because roughly half of the $L$-functions vanish since the root number is $-1$. 

\subsection{Moment conjectures for the positive rank family}
\begin{myconj} 
\label{conj:momentfirstterm'}
For any $k \in \mc$ such that $\text{Re } k > -\half$ there exists $a_k' \neq 0$ such that
\begin{equation}
\frac{1}{|\mathcal{F}'(X)|} \sum_{E \in \mathcal{F}'(X)} (L'(1/2, E))^k \sim \thalf a_k' g_k (\log{X})^{\frac{k(k-1)}{2}}
\end{equation}
holds as $X \rightarrow \infty$
\end{myconj}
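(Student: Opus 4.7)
The plan is to adapt the CFKRS recipe for moments of derivatives at the central point, treating integer $k \geq 1$ first and analytically continuing $a_k'$ via its Euler product to obtain the complex-$k$ statement. By Proposition \ref{prop:rootnumber} the root number $w_E$ is asymptotically equidistributed on $\pm 1$ as $E$ varies in $\mathcal{F}'$, and each curve $E_{a,b^2}$ carries the rational point $(0,b)$, generically of infinite order (Theorem \ref{theorem:L-N}). Parity of BSD then forces rank $\geq 2$ (and hence $L'(\thalf,E) = 0$) whenever $w_E = +1$, so that only the $w_E = -1$ curves contribute to $\sum_E L'(\thalf,E)^k$; this explains the factor $\half$ in the conjecture.

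First I would introduce $k$ small complex shifts $\alpha_1,\ldots,\alpha_k$ and study
\[
S(\alpha;X) := \sum_{\substack{E \in \mathcal{F}'(X) \\ w_E = -1}} \prod_{i=1}^{k} L(\thalf + \alpha_i, E).
\]
For $w_E = -1$ curves one has $L(\thalf + \alpha_i,E) = \alpha_i L'(\thalf,E) + O(\alpha_i^2)$, so the desired moment is the coefficient of $\alpha_1 \cdots \alpha_k$ in $S(\alpha;X)$. Now apply the CFKRS recipe in the standard way: replace each $L$-factor by its approximate functional equation, use $L(s,E) = w_E X_E(s) L(1-s,E)$ to symmetrize over the $2^k$ sign patterns of the shifts, retain the ``diagonal'' contributions coming from $\sum \prod \lambda_E(n_i)\, n_i^{-1/2 - \alpha_i}$, and average over $E$.

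Second, carry out the averaging. This step requires an orthogonality relation for $\mathcal{F}'$, namely the asymptotic of $\frac{1}{|\mathcal{F}'(X)|} \sum_{E \in \mathcal{F}'(X),\, w_E = -1} \lambda_E(n)$, analogous to Proposition \ref{prop:Q*formula} but with $b$ restricted to squares. The local analysis at a prime $p$ becomes a Sato--Tate-type average of Chebyshev polynomials $U_m(a_p/(2\sqrt{p}))$ against the measure obtained by weighting the discriminant count with the indicator that $b$ is a square mod $p$, which deforms the Sato--Tate measure by a quadratic-character weight. Assembling the local factors produces an arithmetical kernel $A_k'(\alpha)$, holomorphic and nonzero near the origin if one can show nonvanishing, and the shifted moment should asymptotically match
\[
S(\alpha;X) \sim \tfrac12\,|\mathcal{F}'(X)| \sum_{\epsilon \in \{\pm 1\}^k} \prod_{i=1}^{k} X_N^{-1/2}(\thalf + \epsilon_i\alpha_i)\, A_k'(\epsilon\alpha) \prod_{i<j} \zeta(1 + \epsilon_i\alpha_i + \epsilon_j\alpha_j),
\]
with $N$ a typical conductor. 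Extracting the coefficient of $\alpha_1 \cdots \alpha_k$ via the standard contour integral representation pins the poles of the zeta factors, produces $(\log X)^{k(k-1)/2}$, and the random matrix constant $g_k$ emerges through the universal contour integral calculation exactly as in the derivation of Conjecture \ref{conj:momentfirstterm}. One then sets $a_k' := A_k'(0,\ldots,0)$.

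The principal obstacle is two-fold. First, the orthogonality relation for $\mathcal{F}'$ must be proved essentially from scratch: the relevant local character sums over $(a,b) \pmod p$ with $b$ a square no longer collapse into rational functions of $p$ in a manageable closed form, exactly as the author flags in the introduction. Second, and delicately for complex $k$, one must verify the nonvanishing $a_k' \neq 0$; this reduces to a local nonvanishing check for the deformed Euler factors on $\text{Re } k > -\half$, plausible from the structure of the Sato--Tate-type density but not formally immediate, and constitutes the genuinely new analytic content of this conjecture beyond the orthogonal template already employed for $\mathcal{F}$.
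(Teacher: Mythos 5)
Your overall plan is correct and follows the CFKRS recipe as the paper does, but the execution differs from the paper in one structurally important way. You restrict to the $w_E = -1$ subfamily from the outset and read off the moment of $L'(\thalf,E)^k$ as the coefficient of $\alpha_1\cdots\alpha_k$ in the shifted product. The paper instead runs the recipe over the \emph{full} family $\mathcal{F}'$, producing the shifted-moment kernel $M_E(\alpha_1,\ldots,\alpha_k)$ of \eqref{eq:rankoneM}, then proves as a separate proposition that $M_E$ vanishes whenever any $\alpha_i = 0$ (a consistency check with BSD), and finally differentiates $\partial_{\alpha_1}\cdots\partial_{\alpha_k}M_E$ at the origin; the factor $\half$ then falls out automatically of the $2^{k-1}$ in Lemma 2.5.2 of \cite{CFKRS} rather than being inserted by hand by a parity argument. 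The two derivations are equivalent at the level of the answer (for $w_E = -1$ one has $\partial_{\alpha_1}\cdots\partial_{\alpha_k}\mathcal{Z}(E)|_{0} = L'(\thalf,E)^k$, while for $w_E = +1$ the derivative vanishes by evenness, so the full-family differentiation picks out exactly your restricted sum), but the full-family formulation has the advantage that it avoids the implicit assumption, hidden in your version, that the orthogonality relation $\frac{1}{|\mathcal{F}'^-(X)|}\sum_{w_E = -1}\lambda_E(n)$ coincides with the unconditional average $Q'_\square$ -- i.e.\ that the root number is uncorrelated with the Dirichlet coefficients. (In the recipe's language this decorrelation is what makes the odd-sign-pattern terms of the approximate functional equation drop out, but it is cleaner to let the recipe handle it rather than to condition the family.) Your description of the arithmetical factor as a Sato--Tate average weighted by the quadratic indicator of $b$ being a square is accurate in spirit; what the paper actually establishes is that $Q^*_\square(p^{e_1},\ldots,p^{e_k}) = Q^*(p^{e_1},\ldots,p^{e_k})$ when $\sum e_i$ is even (via the change of variable $b\mapsto rb$ for a nonresidue $r$), while for odd $\sum e_i$ the sum must be computed directly (e.g.\ $Q^*_\square(p) = -p^{-1/2} + p^{-3/2}$), and this asymmetry is precisely why no closed formula like Proposition \ref{prop:Q*formula} is available for $A_k'$. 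Finally, you are right to flag the nonvanishing $a_k' = A_k'(0,\ldots,0) \neq 0$ as unverified; the paper asserts it as part of the conjecture but gives no argument, and for a general complex $k$ this is indeed the one place where something beyond the orthogonal-family template would be needed.
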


It is possible to write a formula for $a_k'$ as an Euler product but it involves the sums $Q_{\square}^*(p^{e_1}, \ldots, p^{e_k})$ discussed in Section \ref{section:ak'}.  This is in contrast to the family $\mathcal{F}$ where we have evaluated similar sums in terms of Chebyshev polynomials and the traces of the Hecke operators on $\Gamma(1)$.

Our most precise moment conjecture is as follows.
\begin{myconj}
\label{conj:Qkconj}
Let $k$ be a nonnegative integer.  Then for some $\delta > 0$,
\begin{equation}
\sum_{E \in \mathcal{F}'(X)} (L'(1/2, E))^k 
= \half \sum_{E \in \mathcal{F}'(X)} Q_k(N_E) ( 1 + O(N_E^{-\delta})),
\end{equation}
where $Q_k$ has the form
\begin{multline}
\label{eq:QkN}
 Q_k(N) = \\
\frac{(-1)^{\frac{k(k-1)}{2}} 2^{k}}{k! } \frac{1}{(2\pi i)^k} \oint \cdots \oint H(z_1, \ldots, z_k) 
\frac{\Delta(z_1^2, \ldots, z_k^2)^2}{\prod_{i=1}^k z_i^{2k}} 
\prod_{i = 1}^k X_N^{-\half}(\thalf + z_i)
dz_1 \cdots dz_k 
\end{multline}
and
\begin{equation}
H(z_1, \ldots, z_k) = A_k' (z_1, \ldots, z_k) \frac{\prod_{1 \leq i < j \leq k} \zeta(1 + z_i + z_j)}{\prod_{1 \leq i \leq k} \zeta(1 + z_i)},
\end{equation}
where the arithmetical factor $A_k'$ is given by an Euler product that is absolutely convergent in a neighborhood of $(0, \ldots, 0)$.
\end{myconj}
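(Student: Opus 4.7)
The plan is to derive \eqref{eq:QkN} heuristically by applying the CFKRS recipe to the family $\mathcal{F}'$, with two essential modifications relative to the derivation of Conjecture \ref{conj:Pkconj}: the orthogonality relation of $\mathcal{F}'$ must replace that of $\mathcal{F}$, and the recipe must be adapted to extract a first derivative at the central point rather than the central value itself. I would begin with the shifted average
\[
\sum_{E \in \mathcal{F}'(X)} L(\thalf + z_1, E) \cdots L(\thalf + z_k, E),
\]
apply the approximate functional equation to each factor, and keep only the $2^k$ ``diagonal'' pieces indexed by subsets $S \subseteq \{1, \dots, k\}$: for each such $S$, the indices $i \in S$ undergo the swap $z_i \mapsto -z_i$ with the associated functional-equation factor $X_E(\thalf + z_i)$ attached.

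The main new ingredient is the orthogonality relation for $\mathcal{F}'$, the analogue of Proposition \ref{prop:Q*formula}, obtained by averaging products of Hecke eigenvalues of $E = E_{a,b^2}$ over $|a| \le X^{1/3}$, $|b| \le X^{1/4}$ subject to the congruence and minimality conditions defining $\mathcal{F}'(X)$. The requirement that the second Weierstrass coordinate be a square alters the local count at primes $p \mid b$, because the forced rational point $(0, b)$ constrains the reduction modulo $p$; this is the mechanism that produces the extra factor $\prod_i \zeta(1+z_i)^{-1}$ inside $H$. Summing the resulting Dirichlet series across the $2^k$ subsets and pulling out the zeta-function poles yields a representation of the form $A_k'(z) \prod_{i<j} \zeta(1+z_i+z_j) \prod_i \zeta(1+z_i)^{-1}$ in which $A_k'$ is given by an absolutely convergent Euler product in a neighborhood of the origin.

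Next I would assemble the $2^k$ subset contributions into contour integral form by the standard CFKRS symmetrization argument, using invariance of the integrand under $z_i \mapsto -z_i$ to collapse the sum of residues into one multiple contour integral; this intermediate expression carries $\prod z_i^{2k-1}$ in the denominator, exactly as in \eqref{eq:PkN}. To pass from $L(\thalf, E)^k$ to $L'(\thalf, E)^k$ I would use that on root-number $-1$ curves, where the central value vanishes to exact order one under BSD,
\[
L'(\thalf, E)^k = \lim_{z_1, \dots, z_k \to 0} \prod_{i=1}^k \frac{L(\thalf + z_i, E)}{z_i},
\]
so that inserting $\prod z_i^{-1}$ into the integrand produces the denominator $\prod z_i^{2k}$ of \eqref{eq:QkN}. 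The prefactor $\half$ reflects the root-number split: root-number $+1$ curves vanish to order at least two (because of the forced rational point) and so contribute $L'(\thalf, E) = 0$, and these are expected to form half of $\mathcal{F}'$.

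The main obstacle will be the detailed computation of the orthogonality relation for $\mathcal{F}'$ and the verification that its Dirichlet series factors precisely as $A_k' \prod_{i<j}\zeta(1+z_i+z_j) \prod_i\zeta(1+z_i)^{-1}$. Unlike for $\mathcal{F}$, where Proposition \ref{prop:Q*formula} expresses the local sums cleanly in terms of Chebyshev polynomials and Hecke traces $Tr_l(p)$, the distinguished rational point of $E_{a,b^2}$ breaks the Sato--Tate-like averaging, and one must compute the local factors directly as sums over pairs $(a, b^2) \pmod{p^j}$ with the forced-point constraint. Isolating exactly one additional pole at $z_i = 0$ from these local sums, so as to cancel the $\zeta(1+z_i)$ factor in the denominator of $H$ and leave a holomorphic $A_k'$, is the heart of the argument.
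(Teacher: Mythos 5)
Your high-level plan matches the paper's: apply the CFKRS recipe to $\mathcal{F}'$ with the orthogonality relation $Q_{\square}^*$ computed from $\lambda_{a,b^2}$ (whose biased first moment $Q_{\square}^*(p) = -p^{-1/2} + O(p^{-3/2})$ is what produces the factor $\prod_i \zeta(1+z_i)^{-1}$ in $H$), and then extract the first-derivative moment from the shifted moment conjecture; the explanation of the factor $\half$ via root-number splitting is also correct. The one genuine gap is in the passage from central values to derivatives. The paper first proves a short proposition that $M_E(\alpha_1,\dots,\alpha_k)$ vanishes whenever any $\alpha_i = 0$ (needed both to justify the limit and to check BSD consistency), and then obtains $Q_k$ by applying $\partial^k/\partial\alpha_1\cdots\partial\alpha_k$ to the shifted moment at $\alpha = 0$. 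The heart of that calculation is that the numerator $\prod_j z_j + \prod_j \alpha_j$ in the contour-integral representation \eqref{eq:rankoneM} contributes only through its $\prod_j \alpha_j$ piece: the $\prod_j z_j$ piece is even in each $\alpha_j$, so its $k$-fold mixed partial vanishes at $0$, while the $\prod_j\alpha_j$ piece differentiates to $\prod_i z_i^{-2k}$. Your step ``inserting $\prod z_i^{-1}$ into the integrand'' does not capture this. In the limit you write down, the $z_i$'s are shift parameters, but in the contour integral the $z_i$'s are integration variables; dividing the shifted moment by the shift parameters does not termwise place a $\prod_i z_i^{-1}$ in the integrand, because the $\prod_j z_j / \prod_j\alpha_j$ piece of the integrand individually diverges as the shifts tend to zero, and one must account for the parity cancellation that kills it. As written, the extra $\prod_i z_i^{-1}$ in \eqref{eq:QkN} is asserted rather than derived.
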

In terms of the behavior near the origin there are two essential differences between Conjectures \ref{conj:Pkconj} and \ref{conj:Qkconj}.  Namely, in \eqref{eq:QkN} there is the extra factor $\prod_i z_i^{-1}$ and the function $H(z_1, \ldots, z_k)$ has the additional factor $\prod_i \zeta^{-1}(1 + z_i)$.  Thus $Q_k(N)$ and $P_k(N)$ have the same degree, because the polar behavior near the origin in their integral representations are the same.  The factor $\prod_i z_i^{-1}$ arises from the differentiation; the extra zeta function factors arise from the positive rank of the family $\mathcal{F}'$.

\subsection{The relative frequency of rank $2$ and higher curves}
In this section we consider the question of the distribution of $a_p$'s amongst all rank $2$ elliptic curves.  The distribution of $a_p$'s amongst all elliptic curves is known from \cite{Birch}; see also \cite{Schoof}.  It is expected that rank $2$ curves have $a_p$'s that are biased towards being negative.  The conjecture in this section gives a precise prediction of this bias.

Given $r$ and $t \pmod{p}$, $\lambda_{a,b}(p)$ is fixed for $a \equiv r$, $b \equiv t \pmod{p}$.  The number of residue classes $r$ and $t$ such that $\lambda_{r, t}(p) \sqrt{p} = T$ as a function of $p$ and $T$ is known exactly and involves the Hurwitz class number $H(4p - T^2)$ \cite{Birch}.

Thus, to understand the distribution of $a_p$'s amongst all rank $2$ curves it suffices to understand the frequency of occurences of rank $2$ curves as a function of the residue class $r$, $t$.  
Precisely, we consider the following ratio
\begin{equation}
R_q(X) = \left( \sum_{\substack{E \in \mathcal{F}_{r,t}^+(X) \\ L(1/2, E) = 0}} 1 \right) \biggl\slash
\left( \sum_{\substack{E \in \mathcal{F}_{r',t'}^+(X) \\ L(1/2, E) = 0}} 1 \right).
\end{equation}
Technically, $R_q$ counts curves of even positive rank but it is expected that the number of rank $4$ and higher curves is of a lower order of magnitude than the number of rank $2$ curves (and the numerical evidence supports this!).  

We make the following
\begin{myconj} 
\label{conj:rank2}
Let $R_q = \lim_{X \rightarrow \infty} R_q(X)$.  Then
\begin{align}
\label{eq:rank2}
R_q &= \prod_{\substack{p|q \\ }} \left(\frac{1 - \frac{\lambda_{r,t}(p)}{p^{1/2}} + \frac{1}{p}}{1 - \frac{\lambda_{r',t'}(p)}{p^{1/2}} + \frac{1}{p}} \right)^{1/2}
\\
\nonumber
&= \prod_{\substack{p|q \\ }} \left( \frac{N_p(r,t)}{N_p(r', t')} \right)^{1/2},
\end{align}
where $N_p(r,t)$ is the number of points on the elliptic curve $E_{r,t}$ over $\mf_p$.
\end{myconj}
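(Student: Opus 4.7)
The proof should follow the Conrey-Keating-Rubinstein-Snaith strategy from \cite{CKRS}, adapted to the family $\mathcal{F}$. First I would apply Conjecture \ref{conj:momentfirstterm} separately to each arithmetic progression $\mathcal{F}_{r,t}^+(X)$ (the methods of the paper extend to the subfamily with root number $+1$). The restriction to a single class $(r,t) \pmod{6q}$ affects the arithmetical factor $a_k = a_k(r,t)$ only through the local factors at primes $p \mid q$: for such $p$, the coefficients $a$ and $b$ are constant modulo $p$ across the family, so the local eigenvalue $\lambda_E(p) = \lambda_{r,t}(p)$ is a fixed invariant, and the local contribution to $a_k$ specializes to the single value $(1 - \lambda_{r,t}(p) p^{-1/2} + p^{-1})^{-k}$, namely the $k$-th power of the local Euler factor of $L(\thalf,E)$ at $p$ integrated against a point mass. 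Every other ingredient of the moment asymptotic---the random matrix constant $g_k$, the local factors at $p \nmid q$, and the overall normalizations---is independent of $(r,t)$.

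Next I would invoke the Keating-Snaith discretization heuristic for central values of orthogonal $L$-functions developed in \cite{KS} and applied in \cite{CKRS}. This heuristic combines the analytic continuation in $k$ supplied by Conjecture \ref{conj:momentfirstterm} with the integrality/lower bounds for nonvanishing central values to predict that the expected number of vanishing central values in $\mathcal{F}_{r,t}^+(X)$ has asymptotic shape $a_{-1/2}(r,t) \cdot F(X)$, where $F(X)$ is universal, depending only on the conductor distribution, the random matrix constant $g_{-1/2}$, and the common $p \nmid q$ local factors. Forming the ratio $R_q$ cancels all universal factors, so that
\begin{equation*}
R_q = \prod_{p \mid q} \frac{a_{-1/2,p}(r,t)}{a_{-1/2,p}(r',t')} = \prod_{p \mid q} \frac{(1 - \lambda_{r,t}(p) p^{-1/2} + p^{-1})^{1/2}}{(1 - \lambda_{r',t'}(p) p^{-1/2} + p^{-1})^{1/2}},
\end{equation*}
which is the first equality of \eqref{eq:rank2}. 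The second equality is the elementary identity $N_p(r,t) = p + 1 - \sqrt{p}\,\lambda_{r,t}(p) = p(1 + p^{-1} - \lambda_{r,t}(p) p^{-1/2})$, after which the factors of $p$ cancel in the ratio.

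The main obstacle is the second step: the passage from integer moments (Conjecture \ref{conj:Pkconj}) to the value at $k = -\half$, both analytically and philosophically. One must verify that the $(r,t)$-dependent local factor at each $p \mid q$ does extend holomorphically in $k$ past $-\half$ (this is clear here since it is a fixed rational function of $p^{-k}$), and, more seriously, one must accept that the Keating-Snaith gap-probability model correctly captures the actual zeros of the $L$-functions in $\mathcal{F}^+$. This is precisely the reason Conjecture \ref{conj:momentfirstterm} is formulated for complex $k$ with $\text{Re } k > -\half$ rather than integer $k$ only. A secondary technical point is the implicit assumption that curves of rank $\geq 4$ contribute negligibly to $R_q(X)$ as $X \to \infty$, so that $R_q$ genuinely measures a rank $2$ ratio; as the paper remarks, the numerical evidence supports this.
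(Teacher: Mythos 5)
Your proposal is correct and follows essentially the same route as the paper's Section 6: form the ratio of $k$-th moments $R_{q,k}$ over the two arithmetic progressions, observe that Conjecture \ref{conj:momentfirstterm} makes $R_{q,k}$ depend on $(r,t)$ and $(r',t')$ only through the local Euler factors at $p \mid q$, and then set $k = -\half$ via the Keating--Snaith discretization heuristic as in \cite{CKRS}. The only difference is one of exposition: the paper compresses the discretization step into the single sentence ``Arguments as in \cite{CKRS} lead to the conjecture that $R_q(X) \sim R_{q,-1/2}$,'' while you unpack that heuristic and also spell out the elementary identity $N_p(r,t) = p(1 - \lambda_{r,t}(p)p^{-1/2} + p^{-1})$ that gives the second form of \eqref{eq:rank2}.
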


This formula is similar to that given in Conjecture $2$ of \cite{CKRS}.  A new feature of the above formula is that $\lambda_{r,t}(p)$ and $\lambda_{r', t'}(p)$ can attain any integer values between $-2\sqrt{p}$ and $2 \sqrt{p}$.  

An analogous conjecture may be easily formulated for the relative frequency of rank $3$ curves in arithmetic progressions in $\mathcal{F}'$ with minor modifications: if $R_q(X)$ is given as above but with $L'(\thalf, E)$ replacing $L(\thalf, E)$ then we predict that \eqref{eq:rank2} holds with $N_p(r,t)$ defined to be the number of points on $E_{r,t^2}$.

Conjecture \ref{conj:rank2} is derived by using random matrix theory to deduce information on the distribution of values of $L(\thalf, E)$ from knowledge of the moments of the central values (an idea due to Keating and Snaith \cite{KS}).  

\subsection{Organization of the paper}
In Section \ref{section:background} we recall some necessary material on elliptic curves and $L$-functions.  We derive the shifted moment conjectures in Section \ref{section:deriving}, modulo the precise form of the arithmetical factors, which are calculated in Sections \ref{section:arithmeticalfactor} and \ref{section:ak'}.  We briefly derive Conjecture \ref{conj:rank2} in Section \ref{section:rank2derivation}, and explain the connection with the Riemann Hypothesis in Section \ref{section:RH}.

\subsection{Acknowledgements}
I thank Brian Conrey and David Farmer for suggesting this line of research and for many helpful discussions.  I also thank Duc Khiem Huynh and Nina Snaith for useful feedback.

\section{Background 
and basic properties of the families}
\label{section:background}
In this section we summarize some of the relevant background material on elliptic curves.  Our intended audience contains both random matrix theorists and number theorists who are not specialists in elliptic curves, so we have attempted to provide sufficient details and references.

\subsection{Invariants}
We first describe some of the algebraic invariants associated to an elliptic curve.  Silverman's book \cite{Silverman} is a standard reference.

The general Weierstrass equation of an elliptic curve takes the form
\begin{equation}
\label{eq:Wgeneral}
y^2 + a_1 xy + a_3 y = x^3 + a_2 x^2 + a_4 x + a_6,
\end{equation}
where $a_i \in \mz$.
We are primarily concerned with elliptic curves over $\mq$, but understanding the $L$-function associated to such an elliptic curve involves studying the curve over $\mf_p$ (i.e. reducing the coefficients modulo $p$) for all primes $p$.  Completing the square via $y \rightarrow \thalf (y - a_1 x - a_3)$ gives
\begin{equation}
y^2 = 4x^3 + b_2 x^2 + 2b_4 x + b_6,
\end{equation}
where
\begin{align}
b_2 = a_1^2 + 4a_2, \qquad 
b_4 = 2a_4 + a_1 a_3, \qquad
b_6 = a_3^2 + 4a_6.
\end{align}
The change of variables $x \rightarrow x- \frac{b_2}{12}$ gets rid of the quadratic factor, and then scaling by $x \rightarrow x/36$, $y \rightarrow y/108$ gives
\begin{equation}
y^2 = x^3 - 27c_4 x - 54c_6,
\end{equation}
where
\begin{align}
c_4 = b_2^2 - 24 b_4,\qquad
c_6 = -b_2^3 + 36b_2 b_4 - 216b_6.
\end{align}
These changes of variable are well-defined provided the characteristic of the field is not $2$ or $3$.  Clearly the Weierstrass equation for an elliptic curve is not unique. Table 1.2 of \cite{Silverman} records the effect of the admissible change of variables $x = u^2 x' + r$, $y = u^3 y' + u^2 s x' + t$ on the various quantities $\{a_i\}$, $\{b_i\}$, $\{c_i\}$, and $\Delta$.  We record
\begin{align}
\label{eq:c4'} u^4 c_4'  = c_4,\qquad
u^6 c_6'  = c_6,\qquad u^{12} \Delta'  = \Delta.
\end{align}
Notice that no two curves in $\mathcal{F}$ are isomorphic over $\mq$ because the curves $E_{a,b}$ and $E_{a', b'}$ are isomorphic if and only if there exists $d \in \mq$ such that $a' = d^4 a$ and $b' = d^6 b$. 

In order to choose a `good' Weierstrass equation, we want one that, when reduced modulo various primes, has as good reduction as possible.  To make this precise, we say that the Weierstrass equation \eqref{eq:Wgeneral} is {\em minimal} at $p$ if the largest power of $p$ dividing $\Delta$ cannot be reduced by an admissible change of variables.
Furthermore, we say that \eqref{eq:Wgeneral} is a global minimal Weierstrass equation if it is minimal for all primes $p$.  Chapter 10 of \cite{Knapp} is a good reference for a down-to-earth discussion on global minimal Weierstrass equations.  We quote the following result of N\'{e}ron that appears as Theorem 10.3 of \cite{Knapp}:
\begin{mytheo}[N\'{e}ron]
If $E$ is an elliptic curve over $\mq$, then there exists an admissible change of variables over $\mq$ such that the resulting equation is a global minimal Weierstrass equation.  Two such resulting global minimal Weierstrass equations are related by an admissible change of variables with $u = \pm 1$ and with $r,s,t \in \mz$.
\end{mytheo}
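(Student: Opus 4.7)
The plan is to establish existence and uniqueness separately, with the existence argument proceeding prime-by-prime and then patching globally using the fact that $\mq$ has class number one.

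For existence, I would first replace the given Weierstrass equation by an equivalent one with $a_i \in \mz$ (scale by $u = M^{-1}$ for a suitable integer $M$ to clear denominators). Then I would work at one prime $p$ at a time. The transformation formula $u^{12}\Delta' = \Delta$ shows that an admissible change of variables with $v_p(u) = v$ changes $v_p(\Delta)$ by $-12v$. Since $v_p(\Delta) \geq 0$ for any $p$-integral equation, the set of $v_p(\Delta')$ ranging over all $p$-integral admissible transformations has a minimum, and we call an equation attaining this minimum $p$-minimal. Note that for all but finitely many primes (specifically those with $p \notdiv \Delta$) the starting equation is already $p$-minimal, so one only needs to alter finitely many valuations.

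For the global patching, the crucial use of $\mq$ having class number $1$ enters. For each bad prime $p$, choose a local change of variables $(u_p, r_p, s_p, t_p) \in \mz_p \times \mz_p^3$ (with $v_p(u_p)$ equal to the drop in the $p$-adic valuation we wish to achieve on $\Delta$) realizing $p$-minimality. I would then set $u = \prod_{p} p^{v_p(u_p)} \in \mq^\times$ (a finite product) and use the Chinese Remainder Theorem to pick $r, s, t \in \mq$ agreeing with $r_p, s_p, t_p$ modulo a sufficiently high power of each bad prime. The key check is that the new coefficients $a_i'$ arising from $(u, r, s, t)$ are integral at every prime: at the bad primes this follows from the local choice, and at good primes $u$ is a $p$-adic unit and $r, s, t \in \mz_p$ by construction.

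For uniqueness, suppose two global minimal equations are related by $(u, r, s, t)$ with $u \in \mq^\times$ and $r, s, t \in \mq$. Then $\Delta = u^{12}\Delta'$, and $p$-minimality on both sides forces $v_p(\Delta) = v_p(\Delta')$, hence $v_p(u) = 0$ for every $p$, so $u = \pm 1$. Substituting $u = \pm 1$ into the transformation rules (Table 1.2 of \cite{Silverman}) for $a_1, a_2, a_3$ and using that both $\{a_i\}$ and $\{a_i'\}$ are in $\mz$, one reads off $s, r, t \in \mz$ in turn. The main obstacle is the global patching step: one must verify that locally chosen transformations assemble into a single change of variables over $\mq$, and this step essentially depends on $\mq$ being a PID. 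For number fields with nontrivial class group the analogous statement fails, and a proper substitute requires working with the Néron model directly rather than with Weierstrass equations.
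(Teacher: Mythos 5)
The paper does not prove this theorem: it is quoted verbatim from Knapp (Theorem 10.3), and the only surrounding work is the easier verification, via Knapp's Lemma 10.1 (the $c_4, c_6, \Delta$ valuation test), that the specific equations $y^2 = x^3 + ax + b$ in the family $\mathcal{F}$ are already globally minimal. So your sketch has no counterpart in the paper and must stand on its own. Your existence half is the standard argument (local $p$-minimal models, set $u = \prod_p p^{v_p(u_p)}$ using that $\mathbb{Z}$ is a PID, choose $r,s,t$ by CRT and check integrality prime-by-prime) and is sound in outline, modulo the small slip that CRT should be used to produce $r,s,t \in \mathbb{Z}$ rather than merely in $\mathbb{Q}$, which is what makes integrality at the good primes automatic.

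The uniqueness half, however, has a genuine gap. Once $u = \pm 1$ is established, the rule $u a_1' = a_1 + 2s$ gives only $2s \in \mathbb{Z}$, i.e. $s \in \tfrac{1}{2}\mathbb{Z}$; then $u^2 a_2' = a_2 - s a_1 + 3r - s^2$ gives only $3r \in \tfrac{1}{4}\mathbb{Z}$, i.e. $r \in \tfrac{1}{12}\mathbb{Z}$; and $u^3 a_3' = a_3 + r a_1 + 2t$ then gives only $t \in \tfrac{1}{24}\mathbb{Z}$. So integrality of $a_1, a_2, a_3$ together with $u = \pm 1$ does \emph{not} let you ``read off $r,s,t \in \mathbb{Z}$ in turn'' -- the factors of $2$ and $3$ are exactly the obstruction. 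Closing this gap requires using the remaining coefficients $a_4, a_6$ (or the $b_i, c_i$) and, crucially, invoking minimality again at $p = 2$ and $p = 3$; for $p \ge 5$ one can pass to short Weierstrass form where the issue disappears, but the small primes need a genuine case analysis. As written, your uniqueness argument would not rule out a relating transformation with, say, $s = \tfrac{1}{2}$.
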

Now we claim that the Weierstrass equation \eqref{eq:weierstrass} for each $E_{a,b} \in \mathcal{F}$ is a global minimal equation.  We use the condition that if $p^{12} \nmid \Delta$ or $p^4 \nmid c_4$ or $p^6 \nmid c_6$, then the Weierstrass equation is minimal at $p$, where here $c_4 = - 2^4 3 a$, $c_6 = - 2^5 3^3 b$.  This condition is Lemma 10.1 of \cite{Knapp} and is essentially Remark 1.1 of Section VII.1 of \cite{Silverman}, but is easily seen from inspection of \eqref{eq:c4'}. It is immediate from this test that \eqref{eq:weierstrass} is minimal at all $p > 3$, using the condition that if $p^4 | a$ then $p^6 \nmid b$.  We have $2^4 || \Delta$ and $3 \nmid \Delta$ since $(4r^3 + 27t^2, 6) = 1$, so the equation is minimal at $p \leq 3$.  A point to take from this discussion is that it is easy to specify light conditions that ensure minimality of a given Weierstrass equation.  

\subsection{The $L$-function}
The {\em conductor} $N$ associated to $E$ is a certain divisor of the minimal discriminant (which is the discriminant of the global minimal Weierstrass equation).  The conductor and the minimal discriminant have the same prime factors, and for $p > 3$ we have $p || N$ if $E$ has a node (a double root) modulo $p$, and $p^2 || N$ if $E$ has a cusp (a triple root) modulo $p$.  For $p \leq 3$ it is not so simple to give a characterization of the power of $p$ dividing $N$, but it can be found using Tate's algorithm, which is described in \cite{Silverman2}, pp. 363-368.

Given a global minimal Weierstrass equation of the form \eqref{eq:weierstrass}, the $L$-function attached to $E_{a,b}$ is given by
\begin{equation}
\label{eq:Ldef}
L(s, E_{a,b}) = \sum_{n=1}^{\infty} \frac{\lambda_{a,b}(n)}{n^s} = \prod_p \left( 1 - \frac{\lambda_{a,b}(p)}{p^s} + \frac{\psi_N(p)}{p^{2s}} \right)^{-1},
\end{equation}
where for $p \neq 2$, 
\begin{equation}
\lambda_{a, b}(p) = \frac{1}{\sqrt{p}} (p + 1 - \#E(\mf_p)) = - \frac{1}{\sqrt{p}} \sum_{x \shortmod{p}} \left(\frac{x^3 + ax + b}{p} \right),
\end{equation}
and $\psi_N$ is the principal Dirichlet character of modulus the conductor $N$ of $E$.  If $p=2$ then \eqref{eq:weierstrass} has a cusp and $\lambda_{a,b}(2^k) = 0$ for all $k$.  The sum and product converge absolutely provided $\text{Re}(s) > 1$, using Hasse's bound $|\lambda_{a,b}(n)| \leq d(n)$.
The famous modularity theorem \cite{W}, \cite{TW}, \cite{BCDT} shows that the completed $L$-function
\begin{equation}
\Lambda(s, E) = \left(\frac{\sqrt{N}}{2 \pi} \right)^{s + \half} \Gamma(s + \thalf) L(s, E),
\end{equation}
is entire and satisfies the functional equation $\Lambda(s, E) = w_E \Lambda(1-s, E)$ where $w_E = \pm 1$ is called the root number.  We set
\begin{equation}
\label{eq:X}
X_E(s) =  \frac{\Gamma(\frac32 - s)}{\Gamma(\onehalf + s)} \left(\frac{\sqrt{N}}{2 \pi} \right)^{1 - 2s},
\end{equation}
so that
\begin{equation}
L(s, E) = w_E X_E(s) L(1-s, E).
\end{equation}
Note that $X_E(s)$ only depends on the conductor $N$.  We have normalized the $L$-function to have central point $s=\thalf$.

The root number can be effectively computed; it is given by a product of local root numbers.  We state the following result that appears as Proposition 3.1 of \cite{Y}.
\begin{myprop}
\label{prop:rootnumber}
Suppose $4a^3 + 27b^2$ is squarefree.  Then the root number of $E_{a,b}:y^2 = x^3 + ax + b$ is given by
\begin{equation}
w_{E_{a,b}} = \mu(4a^3 + 27b^2) \left(\frac{a}{3b}\right) \chi_4(b) (-1)^{a+1} \epsilon_2,
\end{equation}
where $(\frac{\cdot}{\cdot})$ is the Jacobi symbol, $\chi_4$ is the primitive Dirichlet character of conductor $4$, and $\epsilon_2$ is the local root number at $p=2$.
\end{myprop}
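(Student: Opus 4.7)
The plan is to compute $w_E$ via its factorization into local root numbers and simplify under the squarefreeness hypothesis. Writing $M := 4a^3 + 27b^2$, the structural consequences of $M$ being squarefree are that $b$ must be odd (else $4 \mid M$), that $3 \nmid a$ (else $27 \mid M$), and that $\gcd(a,b)=1$ (else any common prime $p$ gives $p^2 \mid M$). Together these imply $\gcd(a,3b)=1$, so the Jacobi symbol $\left(\frac{a}{3b}\right)$ is well defined; and $3 \nmid M$, so $E$ has good reduction at $3$ and hence $w_3 = 1$. For every $p > 3$ dividing $M$, squarefreeness also forces $p \nmid a$, so $p \nmid c_4 = -48a$, and therefore $E$ has multiplicative reduction at $p$.

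The local root number at each such $p > 3$ can then be extracted from the classical criterion at multiplicative primes: $E$ has split multiplicative reduction at $p$ iff $-c_6$ is a square in $\mathbb{Q}_p^\times$, in which case $w_p = -1$, and $w_p = +1$ otherwise. Since $-c_6 = 2^5 \cdot 3^3 \cdot b = 12^2 \cdot 6b$, this reads $w_p = -\left(\frac{6b}{p}\right)$. Multiplying over the $\omega(M)$ primes above $3$ dividing $M$ and collecting signs yields
\[
\prod_{p > 3,\, p \mid M} w_p \;=\; \mu(M)\left(\frac{6b}{|M|}\right),
\]
with the standard conventions for Jacobi symbols when $M$ is negative.

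The bulk of the remaining work is to rewrite $\left(\frac{6b}{|M|}\right)$ in the desired form using quadratic reciprocity. Splitting it as $\left(\frac{2}{|M|}\right) \left(\frac{3}{|M|}\right) \left(\frac{b}{|M|}\right)$ and computing $M \pmod 8$ directly (one finds $M \equiv 3 \pmod 8$ when $a$ is even and $M \equiv 7 \pmod 8$ when $a$ is odd, using $b$ odd), the supplementary law gives $\left(\frac{2}{|M|}\right) = (-1)^{a+1}$. For the remaining two factors, quadratic reciprocity flips the symbol to $\left(\frac{|M|}{3b}\right)$ up to a sign that produces $\chi_4(b)$ (using that $(|M|-1)/2$ is odd in both parity cases above); and because $M \equiv 4 a^3 \pmod{3b}$ with $\gcd(a,3b)=1$, this flipped symbol is precisely $\left(\frac{a}{3b}\right)$. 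Combining with the archimedean sign $w_\infty = -1$ and setting $\epsilon_2 := w_2$ assembles everything into the claimed formula.

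The main obstacle is the sign bookkeeping — keeping careful track of the Jacobi symbol convention when $M < 0$, the parities of $(|M|\pm 1)/2$ and $(b-1)/2$, and merging the $w_\infty = -1$ with the various reciprocity signs so that everything condenses cleanly into $\mu(M)$, $(-1)^{a+1}$, and $\chi_4(b)$. No single step is conceptually difficult, and the computation is essentially an exercise in quadratic reciprocity once the local root-number formula at multiplicative primes is invoked. The local root number at $p = 2$ resists a clean closed-form expression under these mild hypotheses (one has only $p^2 \nmid M$ constraining the $2$-adic behavior) and is therefore left as $\epsilon_2$, to be computed via Tate's algorithm under any more restrictive hypothesis on $b \pmod{32}$.
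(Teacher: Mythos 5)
This paper does not itself prove Proposition~\ref{prop:rootnumber}; it is quoted from Proposition 3.1 of \cite{Y} (the author's earlier paper), so there is no in-text argument to compare against. That said, your outline is the standard route — factor $w_E = w_\infty \prod_p w_p$ with $w_\infty = -1$, use squarefreeness of $M = 4a^3 + 27b^2$ to force $b$ odd, $3 \nmid a$, $\gcd(a,b)=1$, hence good reduction at $3$ and multiplicative reduction at every $p>3$ dividing $M$; then apply the criterion $w_p = -\bigl(\tfrac{-c_6}{p}\bigr) = -\bigl(\tfrac{6b}{p}\bigr)$, multiply out, and convert via quadratic reciprocity — and it does assemble to the stated formula. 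This is surely the argument in \cite{Y}.

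One concrete imprecision you should fix: your parenthetical ``using that $(|M|-1)/2$ is odd in both parity cases above'' is only true when $M > 0$. If $M < 0$ then $|M| \equiv 1 \pmod 4$ (since $M \equiv 3 \pmod 4$ in both parities of $a$), so $(|M|-1)/2$ is even and the reciprocity sign $(-1)^{\frac{3b-1}{2}\frac{|M|-1}{2}}$ is $+1$, not $(-1)^{(3b-1)/2}$. What saves the computation is that in the $M<0$ case the reduction $|M| \equiv -4a^3 \pmod{3b}$ introduces an extra factor $\bigl(\tfrac{-1}{3b}\bigr) = (-1)^{(3b-1)/2}$ that exactly replaces the missing reciprocity sign. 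In both cases one arrives at $\bigl(\tfrac{3b}{|M|}\bigr) = (-1)^{(3b-1)/2}\bigl(\tfrac{a}{3b}\bigr) = -\chi_4(b)\bigl(\tfrac{a}{3b}\bigr)$, which, combined with $w_\infty = -1$, $\mu(|M|) = (-1)^{\omega(M)}$, and $\bigl(\tfrac{2}{|M|}\bigr) = (-1)^{a+1}$, gives the claimed formula. So the conclusion is right, but the stated reason is wrong in one of the two sign cases; spelling out the $M<0$ branch as above closes the gap. (There is a further, genuinely ambiguous, convention issue for the Jacobi symbol $\bigl(\tfrac{a}{3b}\bigr)$ and for $\chi_4(b)$ when $b<0$, which neither the statement nor your proof pins down; this is a defect inherited from the source and not a flaw you introduced.)
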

The local root number at $2$ is difficult to state explicitly because there are many possible cases.  The point is that $\mu(4a^3 + 27b^2)$ is expected to oscillate independently of the other factors, so that the root number is evenly distributed between $\pm 1$.

An exercise with M\"{o}bius inversion shows that $|\mathcal{F}(X)| \sim  \frac{X^{5/6}}{9q^{2}  \zeta_{6q}(10)}$, where $\zeta_{6q}$ is given by the same Euler product as $\zeta$ but with the local factors at $p | 6q$ removed. A proof of this is contained in the proof of Lemma \ref{lem:brain} (take $n_1 = 1$, $k=1$).  Similarly, $|\mathcal{F}'(X)| \sim \frac{X^{7/12}}{9\zeta_6(7)}$.

Each curve $E_{a,b^2} \in \mathcal{F}'$ has the obvious point $(0,b)$.  The Lutz-Nagell criterion easily shows that this point is torsion (has finite order) very rarely.  We paraphrase Corollary 7.2 of \cite{Silverman}:
\begin{mytheo}[Lutz-Nagell]
\label{theorem:L-N}
Let $E$ be an elliptic curve given by \eqref{eq:weierstrass}.  Suppose $(x,y)$ is a non-zero torsion point.  Then $x, y \in \mz$ and either $y=0$ or $y^2 | 4a^3 + 27b^2$.
\end{mytheo}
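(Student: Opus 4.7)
The plan is to prove the two conclusions separately: integrality $x, y \in \mz$ via local analysis at each prime, and divisibility $y^2 \mid 4a^3 + 27b^2$ via the duplication formula combined with a polynomial manipulation modulo $y^2$.

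For integrality, I fix a prime $p$ and a nonzero torsion $P = (x, y) \in E(\mq)$, and argue by contradiction assuming $v_p(x) < 0$. Balancing valuations in $y^2 = x^3 + ax + b$ forces $v_p(x) = -2n$ and $v_p(y) = -3n$ for some $n \geq 1$, so $P$ lies in the kernel-of-reduction filtration
\[
E_n(\mq_p) = \{O\} \cup \{Q \in E(\mq_p) : v_p(x(Q)) \leq -2n\},
\]
parametrized near $O$ by the uniformizer $t = -x/y$ (so $v_p(t) \geq n$ on $E_n$). Since the formal group law of $E$ has the form $F(t_1, t_2) = t_1 + t_2 + \cdots \in \mz[[t_1, t_2]]$, each $E_n$ is a subgroup and each quotient $E_n / E_{n+1}$ is isomorphic to $\mz/p$; a formal-logarithm argument combined with pro-$p$ completeness then shows $E_1(\mq_p)$ is torsion-free. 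This contradicts $P \in E_n \subseteq E_1$ being nonzero torsion. Hence $v_p(x) \geq 0$ for every $p$, so $x \in \mz$, and $y \in \mz$ follows from $y^2 \in \mz$ together with $y \in \mq$.

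For the divisibility, assume $y \neq 0$, so $2P \neq O$ is torsion and therefore has integer $x$-coordinate by the integrality just established. The duplication formula
\[
x(2P) = \lambda^2 - 2x, \qquad \lambda = \frac{f'(x)}{2y}, \qquad f(x) = x^3 + ax + b,
\]
then forces $\lambda^2 \in \mz$, so $4y^2 \mid f'(x)^2$ and in particular $y^2 \mid (3x^2 + a)^2$. Reducing $(3x^2 + a)^2$ and $x^2(3x^2 + a)^2$ modulo $y^2$ via $x^3 \equiv -ax - b$ gives
\[
(3x^2 + a)^2 \equiv a^2 - 3ax^2 - 9bx \pmod{y^2}, \qquad x^2(3x^2 + a)^2 \equiv (2ax + 3b)^2 \pmod{y^2},
\]
so $y^2$ divides both $3ax^2 + 9bx - a^2$ and $4a^2 x^2 + 12abx + 9b^2$. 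Taking $4a$ times the first and subtracting $3$ times the second cancels the $x$-dependent terms and leaves $y^2 \mid 4a^3 + 27b^2$.

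The main obstacle I anticipate is the $p = 2$ case of the integrality step, where $E_1(\mq_2)$ can a priori contain $2$-torsion from the formal group; the standard fix is to work with the deeper subgroup $E_2(\mq_2)$, which is torsion-free, and to dispose of the remaining case $v_2(x) = -2$ separately using that $2$-torsion points of $E$ have $y = 0$ and hence integer $x$ by the rational-roots theorem. Once integrality is in hand, the divisibility step is purely algebraic and requires nothing beyond the duplication formula and the two congruences above.
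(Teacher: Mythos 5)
The paper does not prove this statement: it explicitly defers to the literature, introducing it as a paraphrase of Corollary 7.2 of \cite{Silverman}, so there is no in-paper proof to compare against.

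Your blind proof is correct and is essentially the standard textbook argument. The integrality step is the usual prime-by-prime formal-group filtration argument, and you correctly identify and patch the $p=2$ subtlety: pass to $E_2(\mq_2)$, and observe that a nonzero torsion $P$ with $v_2(x)=-2$ lies in $E_1\setminus E_2$, so, since $E_1/E_2\cong\mz/2$ and $E_2$ is torsion-free, $2P=O$, forcing $y=0$, which contradicts $v_2(y)=-3$. The divisibility step is likewise the standard one: with $y\neq0$, the point $P$ does not have order $\leq 2$, so $2P$ is a nonzero torsion point, giving $x(2P)\in\mz$, hence $\lambda^2\in\mz$ and $y^2\mid(3x^2+a)^2$; your two reductions modulo $y^2$ are correct, and the elimination
\[
4a\bigl(3ax^2+9bx-a^2\bigr)-3\bigl(4a^2x^2+12abx+9b^2\bigr)=-\bigl(4a^3+27b^2\bigr)
\]
indeed kills the $x$-dependence. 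One small stylistic point: in the $p=2$ patch the clean contradiction is simply $y=0$ versus $v_2(y)=-3$; the rational-roots remark about the $x$-coordinate of a $2$-torsion point is superfluous.
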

Thus if $(0,b)$ is a torsion point then $b^2 | 4a^3$.  Clearly the number of $E_{a,b} \in \mathcal{F}'(X)$ such that $b^2 | 4a^3$ is $O(X^{\frac13 + \varepsilon})$ (which should be compared to $X^{7/12}$, the total number of curves).

\subsection{Chebyshev polynomials}
\label{section:Chebyshev}
We take a brief detour to summarize some relevant facts needed about Chebyshev polynomials (of the second kind) $U_k$.  A 
reference for the necessary formulas is Section 8.94 of \cite{GR}.  By definition,
\begin{equation}
U_n(\cos \theta) = \frac{\sin(n+1)\theta}{\sin\theta}.
\end{equation}
These satisfy the recursion formula (8.941.2 of \cite{GR})
\begin{equation}
U_{n+2}(x) - 2x U_{n+1}(x) + U_{n}(x) = 0
\end{equation}
which is equivalent to the formal identity
\begin{equation}
\label{eq:Ukgenseries}
\sum_n U_n(x) t^n = \frac{1}{1 - 2 x t + t^2},
\end{equation}
the sum being absolutely convergent for $|x|, |t| < 1$.
Since the Hecke operators satisfy essentially the same recurrence relation, we have
\begin{equation}
\label{eq:lambdapower}
\lambda_E(p^j) = \begin{cases} U_j\left(\frac{\lambda_E(p)}{2}\right), \qquad &\text{if $(p,N) = 1$},\\ \lambda_E^j(p), \qquad &\text{if $p | N$}. \end{cases}
\end{equation}
The Chebyshev polynomials $U_n(\cos \theta)$ form an orthonormal system with respect to the Sato-Tate measure $\frac{2}{\pi} \sin^2{\theta} d\theta := d\mu_{ST}$, where the integration is over the interval $[0, \pi]$.  

It will be useful to represent a product of Chebyshev polynomials in terms of Chebyshev polynomials.
Let $c_l = c_l(e_1, \ldots, e_k)$ be defined by
\begin{equation}
\label{eq:Ukortho}
U_{e_1}(x) \cdots U_{e_k}(x) = \sum_{l} c_l U_l(x),
\end{equation}
and set $f = e_1 + \ldots + e_k$.  Note that by parity considerations (namely, the parity of $U_k(x)$ as a function of $x$ is the same as the parity of $k$), $c_l = 0$ if $l \not \equiv f \pmod{2}$.  Furthermore, by degree considerations $c_l = 0$ if $l > f$.  The orthogonality relation gives
\begin{align}
\label{eq:cr}
c_l = \int U_l(\cos\theta) \prod_{j=1}^{k} U_{e_j} (\cos\theta) d\mu_{ST}.
\end{align} 

\section{Deriving the conjectures}
\label{section:deriving}
\subsection{Central values of the family of all elliptic curves}
We begin by deriving Conjecture \ref{conj:Pkconj}, however we delay the computation of the exact form of the arithmetical factor until Section \ref{section:arithmeticalfactor}.  Actually, we find a conjectural formula for a product of $k$ $L$-functions at points shifted slightly away from the central point.  Conjecture \ref{conj:Pkconj} is a limiting form of this more general conjecture.  This generality also allows us to compute the central values of the derivatives by differentiation with respect to the shift parameters.

We want the moment
\begin{equation}
\frac{1}{|\mathcal{F}(X)|} \sum_{E \in \mathcal{F}(X)} L(\thalf + \alpha_1,E) \dots L(\thalf + \alpha_k,E),
\end{equation}
which is analogous to 4.1.4 of \cite{CFKRS}.  Actually we will write a conjecture for the more symmetric expression
\begin{equation}
\label{eq:symmetricmoment}
\frac{1}{|\mathcal{F}(X)|} \sum_{E \in \mathcal{F}(X)} \mathcal{Z}(E), 
\end{equation}
where
\begin{equation}
\mathcal{Z}(E) = X^{-\half}_E(\thalf + \alpha_1) \dots X^{-\half}_E(\thalf + \alpha_k )L(\thalf + \alpha_1,E) \dots L(\thalf + \alpha_k,E)
\end{equation}
and where recall $X_E$ satisfies
\begin{equation}
X_E^{-\half}(\thalf + \alpha) L(\thalf + \alpha,E) = w_E X_E^{-\half}(\thalf - \alpha) L(\thalf - \alpha,E).
\end{equation}
For now we just work with the $L$-functions.

For each $L$ function we write a kind of
approximate functional equation as follows
\begin{equation}
L(\thalf + \alpha, E) = \sum_n \frac{\lambda_E(n)}{n^{\half + \alpha}} + w_E X_E(\thalf + \alpha) \sum_n \frac{\lambda_E(n)}{n^{\half - \alpha}}.
\end{equation}
Consider the term obtained by taking the `first part' of each approximate functional equation:
\begin{equation}
\frac{1}{|\mathcal{F}(X)|} \sum_{E \in \mathcal{F}(X)} \sum_{n_1, \dots, n_k} \frac{\lambda_E(n_1) \dots \lambda_E(n_k)}{n_1^{\half + \alpha_1} \dots n_k^{\half + \alpha_k}}.
\end{equation}
Now replace each summand and replace it with its average, say
\begin{equation}
\label{eq:pickup}
\sum_{n_1, \dots, n_k} \frac{R_{r,t}(n_1,\dots,n_k)}{n_1^{\half + \alpha_1} \dots n_k^{\half + \alpha_k}},
\end{equation}
where
\begin{equation}
R_{r,t}(n_1,\dots, n_k) = \lim_{X \rightarrow \infty} \frac{1}{|\mathcal{F}_{r,t}(X)|} \sum_{E \in \mathcal{F}_{r,t}(X)} \lambda_E(n_1) \dots \lambda_E(n_k).
\end{equation}
 
We now derive the necessary expected value.
To this end, we make the following
\begin{mydefi}
Let $n_1, \ldots, n_k$ be positive integers, set $n = [n_1, \ldots, n_k]$ (the least common multiple), and let $n^*$ be the product of primes dividing $n$.  Define $Q^*$ by
\begin{equation}
Q^*(n_1, \ldots, n_k) = \frac{1}{{n^*}^2} \sum_{a, b \shortmod{n^*}} \lambda_{a, b}(n_1) \cdots \lambda_{a, b}(n_k).
\end{equation}
Furthermore, set $n_i = m_i l_i$, where $(m_i, 6q) = 1$ and every prime dividing $l_i$ also divides $6q$.  Then set
\begin{equation}
Q^*_{r,t}(n_1, \ldots, n_k) = \lambda_{r,t}(l_1) \ldots \lambda_{r,t}(l_k) Q^*(m_1, \ldots, m_k) \prod_{\substack{p | n \\ p \notdiv 6q}} (1 - p^{-10})^{-1}.
\end{equation}
\end{mydefi}

The desired expected value is given by the following
\begin{mylemma}  We have
\label{lem:brain}
\begin{equation}
R_{r,t}(n_1,\ldots, n_k) = Q^*_{r,t}(n_1, \ldots, n_k).
\end{equation}
Furthermore, $Q^*(n_1,\ldots,n_k)$ is multiplicative.  That is, if $n_i = n_i' n_i''$ for $1\leq i \leq k$ with $(n_1' \dots n_k', n_1'' \dots n_k'') = 1$ then
\begin{equation}
Q^*(n_1,\ldots,n_k) = Q^*(n_1', \ldots, n_k') Q^*(n_1'', \ldots, n_k'').
\end{equation}
\end{mylemma}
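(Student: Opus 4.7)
The plan is to derive $R_{r,t}(n_1,\ldots,n_k) = Q^*_{r,t}(n_1,\ldots,n_k)$ by Möbius-inverting the minimality condition defining $\mathcal{F}$, and to obtain the multiplicativity of $Q^*$ directly from the Chinese Remainder Theorem.

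First I would Möbius-invert the minimality condition $p^4 | a \Rightarrow p^6 \nmid b$. Using the criterion recalled in the excerpt (minimality at $p > 3$ is equivalent to $\neg(p^4|a \wedge p^6|b)$, while the hypotheses automatically force minimality at $p \leq 3$ for all $(a,b)$ satisfying the congruences), the indicator of minimality factors as $\prod_p\bigl(1 - \mathbf{1}[p^4|a,\, p^6|b]\bigr)$ and expands to
\begin{equation*}
\mathbf{1}[\text{minimal}] \;=\; \sum_{d \text{ squarefree}} \mu(d)\, \mathbf{1}\bigl[d^4 | a,\, d^6 | b\bigr].
\end{equation*}
Substituting this into $\sum_{E \in \mathcal{F}_{r,t}(X)} \Lambda(a,b)$, where $\Lambda(a,b) := \prod_j \lambda_{a,b}(n_j)$, and into $|\mathcal{F}_{r,t}(X)|$ itself, produces sums indexed by $d$; the hypotheses on $(r,t)$ imply $(r,6q) = (t,6q) = 1$, which forces $(d,6q) = 1$ for any $d$ with a nonempty inner sum.

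The crucial observation is that the numerator receives contributions only from $d$ with $(d, m^*) = 1$. For any prime $p | (d, m^*)$, the conditions $p^4 | a$ and $p^6 | b$ force $p | a$ and $p | b$, so $E_{a,b}$ reduces modulo $p$ to $y^2 = x^3$ (additive/cuspidal reduction); by the standard convention the Euler factor at $p$ is trivial, giving $\lambda_{a,b}(p^j) = 0$ for every $j \geq 1$, and since $p | m^*$ implies $p | n_i$ for some $i$, this kills a factor of $\Lambda(a,b)$. For the remaining $d$ (squarefree, coprime to $6qm^*$), the moduli $6q$, $d^4$ (resp.\ $d^6$), and $m^*$ are pairwise coprime, so CRT parameterizes residue classes uniquely and counting lattice points in the box $|a|\leq X^{1/3},\, |b| \leq X^{1/2}$ fibered over residues mod $m^*$ yields
\begin{equation*}
\sum_{\substack{a \equiv r \pmod{6q},\, d^4|a,\, |a|\leq X^{1/3}\\ b \equiv t \pmod{6q},\, d^6|b,\, |b|\leq X^{1/2}}} \!\!\Lambda(a,b) \;=\; \frac{4 X^{5/6}}{(6q)^2 d^{10}} \prod_{j} \lambda_{r,t}(l_j)\, Q^*(m_1,\ldots,m_k) \,+\, O\bigl(X^{5/6-\delta}\bigr),
\end{equation*}
the $\prod_j \lambda_{r,t}(l_j)$ arising because on primes of $6q$ dividing $n$ the residues $(a,b) \bmod p$ are pinned to $(r,t) \bmod p$.

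Summing $\mu(d) d^{-10}$ over squarefree $d$ coprime to $6q m^*$ gives $1/\zeta_{6qm^*}(10)$, and dividing by $|\mathcal{F}_{r,t}(X)| \sim X^{5/6}/(9q^2 \zeta_{6q}(10))$ (the $k=0$ case of the same computation) converts the zeta ratio into $\zeta_{6q}(10)/\zeta_{6qm^*}(10) = \prod_{p|n,\,p\nmid 6q}(1-p^{-10})^{-1}$, exactly the extrinsic factor in the definition of $Q^*_{r,t}$. Multiplicativity of $Q^*$ is then immediate: if $n_i = n_i' n_i''$ with $(\prod n_i', \prod n_i'') = 1$, then $n^* = n'^* n''^*$ with coprime factors, so CRT factors the sum over $(a,b) \bmod n^*$ into a product of sums over $(a',b') \bmod n'^*$ and $(a'',b'') \bmod n''^*$, while the Hecke multiplicativity $\lambda_{a,b}(n_i) = \lambda_{a,b}(n_i') \lambda_{a,b}(n_i'')$ splits each summand. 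The main obstacle is the vanishing step for $(d, m^*) > 1$, where one must cleanly justify the additive-reduction convention $\lambda_{a,b}(p^j) = 0$ on the non-minimal equations that appear post-Möbius-inversion; a secondary technical point is ensuring the error $O(X^{5/6-\delta})$ above is uniform enough in $d$ to permit exchanging the limit $X \to \infty$ with the absolutely convergent sum $\sum_d$.
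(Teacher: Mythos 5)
Your proposal follows essentially the same route as the paper: M\"obius-invert the minimality condition, split the inner sum by the gcd of $d$ with $m^*$, use periodicity of $\lambda_{a,b}(m_j)$ mod $m^*$ to extract $Q^*$, and recover the zeta ratio from $\sum_d \mu(d) d^{-10}$. The CRT argument for multiplicativity is also exactly the paper's.

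However, the step you flag as the main obstacle is in fact where your justification, as written, would fail. You argue that for $p \mid (d, m^*)$ the sum vanishes because ``$E_{a,b}$ reduces modulo $p$ to $y^2 = x^3$ (additive/cuspidal reduction); by the standard convention the Euler factor at $p$ is trivial.'' This is a statement about the \emph{model}, not about the \emph{curve}: when $p^4 \mid a$ and $p^6 \mid b$, the equation $y^2 = x^3 + ax + b$ is simply non-minimal at $p$, and the abstract curve $E_{a,b}$ may very well have good reduction at $p$ (with $a_p \neq 0$). So if $\lambda_{a,b}$ is taken to mean ``the $p$-th Dirichlet coefficient of the $L$-function of $E_{a,b}$,'' the contributions from $p \mid (d,m^*)$ do \emph{not} vanish, and the M\"obius inversion as you have set it up does not go through.

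The paper avoids this by \emph{replacing} the $L$-series definition with the extrinsic formula $\lambda_{a,b}(p) = -p^{-1/2}\sum_{x \bmod p}\left(\tfrac{x^3+ax+b}{p}\right)$ for $p>2$, $\lambda_{a,b}(2)=0$, extended to prime powers by the Hecke recursion \eqref{eq:lambdapower} with $(p,N)=1$ replaced by $(p, 16(4a^3+27b^2))=1$, and to composites by multiplicativity. With this definition, $p \mid (a,b)$ gives $\lambda_{a,b}(p) = -p^{-1/2}\sum_x\left(\tfrac{x^3}{p}\right) = -p^{-1/2}\sum_x\left(\tfrac{x}{p}\right) = 0$, hence $\lambda_{a,b}(p^j)=0$ for all $j\geq 1$; and the two definitions agree on all of $\mathcal{F}$ because there \eqref{eq:weierstrass} is already a global minimal model. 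One also needs the complementary identity $\lambda_{ad^4, bd^6}(p) = \lambda_{a,b}(p)$ for $(p,d)=1$, which the paper proves by the substitution $x \to d^2 x$ in the character sum (producing a factor $\left(\tfrac{d^6}{p}\right)=1$); your argument implicitly relies on this as well when you pass from the shifted residues to $Q^*(m_1,\ldots,m_k)$, so it should be stated. Finally, a small slip: the hypotheses do not give $(r,6q)=(t,6q)=1$ --- only $(4r^3+27t^2, 6q)=1$ --- but the conclusion $(d,6q)=1$ still holds: if $p \mid (d, 6q)$ then $p \mid a$ and $p \mid b$ force $p \mid r$ and $p \mid t$, hence $p \mid 4r^3 + 27t^2$, contradiction.
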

\begin{proof}
Recall
\begin{equation}
R_{r,t}(n_1,\ldots, n_k) = \lim \frac{1}{|\mathcal{F}(X)|} \mathop{\sum \sum}_{\substack{|a| \leq X^{1/3}, |b| \leq  X^{1/2} \\ p^4 | a \Rightarrow p^6 \nmid b \\ a \equiv r \shortmod{6q} \\ b \equiv  t \shortmod{6q}}}
\lambda_{a, b}(n_1) \cdots \lambda_{a, b}(n_k),
\end{equation}
where we originally defined $\lambda_{a,b}(n)$ to be the $n$-th coefficient in the Dirichlet series expansion of $L(s,E_{a,b})$.  In case \eqref{eq:weierstrass} is a global minimal Weierstrass equation for $E_{a,b}$ (a condition that is met for all curves in $\mathcal{F}$), then 
\begin{equation}
\label{eq:lambda2}
\lambda_{a,b}(p) = -\frac{1}{\sqrt{p}} \sum_{x \shortmod{p}} \left(\frac{x^3 + ax + b}{p} \right).
\end{equation}
We extend the definition of $\lambda_{a,b}(p)$ to arbitrary integers $a$ and $b$ and primes $p$ by using \eqref{eq:lambda2} for $p > 2$, and setting $\lambda_{a,b}(2) = 0$.  We then extend to prime powers using \eqref{eq:lambdapower}, replacing the condition $(p,N) = 1$ with $(p, 16(4a^3 + 27b^2)) = 1$, and we extend to composite integers by multiplicativity.  This formulation will allow us to easily sum over $a$ and $b$ where \eqref{eq:weierstrass} is not necessarily a global minimal Weierstrass equation,

A slight generalization of the M\"{o}bius inversion formula gives that
\begin{equation}
\sum_{\substack{d^4 | a \\ d^6 | b}} \mu(d) = \begin{cases} 1, \qquad \text{if there does not exist a $p$ such that $p^4 |a$ and $p^6 | b$}, \\0, \qquad \text{otherwise}. \end{cases}
\end{equation}
Thus
\begin{align}
R_{r,t}(n_1,\ldots, n_k) = \lim \frac{1}{|\mathcal{F}(X)|} \sum_{\substack{d \leq X^{\frac{1}{12}} \\(d, 6q) = 1}} \mu(d) \mathop{\sum \sum}_{\substack{|a| \leq d^{-4} X^{1/3} \\ |b| \leq d^{-6} X^{1/2} \\ a \equiv \overline{d}^{4} r \shortmod{6q} \\ b \equiv \overline{d}^6 t \shortmod{6q}}}
\lambda_{ad^4, bd^6}(n_1) \cdots \lambda_{ad^4, bd^6}(n_k).
\end{align}
The condition $(d,6q) = 1$ follows from the fact that $(4r^3 + 27t^2, 6q) = 1$.
Now we claim that $\lambda_{ad^4, bd^6}(m) = \lambda_{a, b}(m)$ when $(m, d) = 1$ and $\lambda_{ad^4, bd^6}(m) = 0$ when $(m, d) > 1$.  By multipicativity and the fact that prime powers are determined by primes, it suffices to check for $m$ prime.  Notice that in case $p | (a,b)$ then the sum \eqref{eq:lambda2} vanishes.  Thus if $p | d$ then $\lambda_{ad^4, bd^6}(p) = 0$.  On the other hand, if $(p, d) = 1$ then the change of variables $x \rightarrow d^2 x$ modulo $p$ gives
\begin{equation}
\sum_{x \shortmod{p}} \left(\frac{x^3 + ad^4x + bd^6}{p} \right) =  \left(\frac{d^6}{p}\right) \sum_{x \shortmod{p}} \left(\frac{x^3 + ax + b}{p} \right),
\end{equation}
so $\lambda_{ad^4, bd^6}(p) = \lambda_{a,b}(p)$ as claimed.  Thus for $(d,n) =1$ we have
\begin{equation}
\lambda_{ad^4, bd^6}(n_1) \cdots \lambda_{a, b}(n_k) = \lambda_{r, t}(l_1) \cdots \lambda_{r, t}(l_k) \lambda_{a, b}(m_1) \cdots \lambda_{a, b}(m_k),
\end{equation}
by multiplicativity and since $\lambda_{ad^4, bd^6}(l_i) = \lambda_{r,t}(l_i)$.

Hence we have
\begin{multline}
R_{r,t}(n_1,\ldots, n_k) 
\\
= \lim \frac{1}{|\mathcal{F}(X)|} \lambda_{r,t}(l_1) \ldots \lambda_{r,t}(l_k) \sum_{\substack{d \leq X^{\frac{1}{12}} \\(d, 6nq) = 1}} \mu(d) \mathop{\sum \sum}_{\substack{|a| \leq d^{-4} X^{1/3} \\ |b| \leq d^{-6} X^{1/2} \\ a \equiv \overline{d}^{4} r \shortmod{6q} \\ b \equiv \overline{d}^6 t \shortmod{6q}}}
\lambda_{a, b}(m_1) \cdots \lambda_{a, b}(m_k).
\end{multline}
Now $\lambda_{a,b}(m_1) \dots \lambda_{a,b}(m_k)$ is periodic in $a$ and $b$ with period equal to the product of primes dividing the least common multiple of $m_1, \dots, m_k$, say $m^*$.  Breaking up the sum over $a$ and $b$ into arithmetic progressions modulo $6qm^*$ gives
\begin{multline}
R_{r,t}(n_1,\ldots, n_k) = \lim \frac{1}{|\mathcal{F}(X)|} \lambda_{r,t}(l_1) \ldots \lambda_{r,t}(l_k) \frac{4X^{5/6}}{36 q^2} \sum_{\substack{d \leq X^{\frac{1}{12}} \\(d, 6mq) = 1}} \frac{\mu(d)}{d^{10}} 
\\
\frac{1}{{m^*}^2} \sum_{\substack{\alpha \shortmod{m^*} \\ \beta \shortmod{m^*}}} 
\lambda_{\alpha, \beta}(m_1) \cdots \lambda_{\alpha, \beta}(m_k),
\end{multline}  
which simplifies to
\begin{equation}
R_{r,t}(n_1,\ldots, n_k) =\lim \frac{1}{|\mathcal{F}(X)|} \lambda_{r,t}(l_1) \ldots \lambda_{r,t}(l_k) \frac{X^{5/6}}{9 q^2 \zeta_{6mq}(10)} Q^*(m_1,\dots,m_k).
\end{equation} 
Taking $k=1$, $n_1=1$ gives
\begin{equation}
|\mathcal{F}(X)| \sim \frac{X^{5/6}}{9 q^2 \zeta_{6q}(10)},
\end{equation}
so
\begin{equation}
R_{r,t}(n_1,\ldots, n_k) = \lambda_{r,t}(l_1) \ldots \lambda_{r,t}(l_k) Q^*(m_1,\dots,m_k) \frac{\zeta_{6q}(10)}{\zeta_{6mq}(10)}.
\end{equation}
Using that
\begin{equation}
\frac{\zeta_{6q}(10)}{\zeta_{6mq}(10)} = \prod_{\substack{p | m \\ p \nmid 6q}} (1- p^{-10})^{-1} =\prod_{\substack{p | n \\ p \nmid 6q}} (1- p^{-10})^{-1},
\end{equation}
completes the proof that $R_{r,t} = Q^*_{r,t}$

Now we show that $Q^*(n_1,\ldots, n_k)$ is multiplicative.  To simplify notation only, we take $k=1$ and show $Q^*(mn) = Q^*(m) Q^*(n)$ provided $(m,n) = 1$, where $m$ and $n$ are squarefree.  Extending to the general case is straightforward.  By the Chinese remainder theorem we may write all representatives $a \pmod{mn}$ uniquely in the form $a_1 m \overline{m} + a_2 n \overline{n}$, where $a_1$ runs modulo $n$, $a_2$ runs modulo $m$, $m \overline{m} \equiv 1 \pmod{n}$, and $n \overline{n} \equiv 1 \pmod{m}$, and similarly for $b$.  Then we get, using $\lambda(mn) = \lambda(m) \lambda(n)$,
\begin{align}
Q^*(mn) & = \frac{1}{m^2 n^2} \sum_{a_1, b_1 \shortmod{n}} \sum_{a_2, b_2 \shortmod{m}} \lambda_{a_1 m \overline{m} + a_2 n \overline{n}, b_1 m \overline{m} + b_2 n \overline{n}}(mn) 
\\
& = \frac{1}{m^2 n^2} \sum_{a_1, b_1 \shortmod{n}} \sum_{a_2, b_2 \shortmod{m}} \lambda_{a_1, b_1}(n) \lambda_{a_2,b_2}(m) = Q^*(m) Q^*(n). \qedhere
\end{align}
\end{proof}

Now we continue with our derivation of the moment conjecture, picking up with \eqref{eq:pickup}.
We extend the summation over the $n_i$ to all positive integers and set
\begin{equation}
N_E(\alpha_1, \ldots, \alpha_k) 
 =
X_E^{-\half}(\thalf + \alpha_1) \cdots X_E^{-\half}(\thalf + \alpha_k)
\sum_{n_1} \cdots \sum_{n_k} \frac{Q_{r,t}^*(n_1, \ldots, n_k)}{n_1^{\half + \alpha_1} \cdots n_k^{\half + \alpha_k }}.
\end{equation}
We will obtain a meromorphic continuation of $N_E(\alpha_1, \ldots, \alpha_k)$ to a neighborhood of $(0,\ldots,0)$.
Let
\begin{equation}
M_E(\alpha_1, \ldots, \alpha_k) = \sum_{\substack{\epsilon_1, \dots, \epsilon_k \in \{-1,1\} \\ \epsilon_1 \dots \epsilon_k = 1}} N_E(\epsilon_1 \alpha_1, \ldots, \epsilon_k \alpha_k).
\end{equation}
Here $M_E$ is obtained by summing $N_E$ over all possible ways of swapping an even number of $\alpha_i$'s with their negatives.  The moment \eqref{eq:symmetricmoment} is clearly invariant under these symmetries, so in effect this is the simplest way to manipulate $N_E(\alpha_1, \ldots, \alpha_k)$ to obtain an expression with these symmetries.  This procedure is different than that stated in \cite{CFKRS}, but is essentially equivalent.

The general conjecture is then (\cite{CFKRS}, 4.1.6)
\begin{equation}
\sum_{E \in \mathcal{F}(X)} \mathcal{Z}(E)
=
\sum_{E \in \mathcal{F}(X)} M_E(\alpha_1, \ldots, \alpha_k) \left(1 + O(N_E^{-\delta}) \right) 
\end{equation}
for some $\delta > 0$.

The next step is to express this answer in a more usable form.  
We write
\begin{multline}
M_E(\alpha_1, \ldots, \alpha_k) 
= \\
\mathop{\sum \cdots \sum}_{\epsilon_1 \cdots  \epsilon_k =1} 
X_E^{-\half}(\thalf + \epsilon_1 \alpha_1) \cdots X_E^{-\half}(\thalf + \epsilon_k \alpha_k) 
H(\epsilon_1 \alpha_1, \ldots, \epsilon_k \alpha_k),
\end{multline}
where
\begin{equation}
\label{eq:Hdef1}
H(z_1, \ldots, z_k) =  \sum_{n_1} \cdots \sum_{n_k} \frac{Q_{r,t}^*(n_1, \ldots, n_k)}{n_1^{\half + z_1} \cdots n_k^{\half + z_k}}.
\end{equation}

Using the multiplicativity of $Q^*$ we write
\begin{align}
H(z_1, \ldots, z_k) =
\prod_p \sum_{e_1} \cdots \sum_{e_k} \frac{Q^*_{r,t}(p^{e_1}, \ldots, p^{e_k})}{p^{e_1(\half + z_1) + \ldots + e_k(\half + z_k)}} &
 \\
\label{eq:Hdef2}
 = 
\left( \prod_{p |6q} \sum_{e_1} \cdots \sum_{e_k}  \frac{\lambda_{r,t}(p^{e_1})\dots \lambda_{r,t}(p^{e_k}) }{p^{e_1(\half + z_1) + \ldots + e_k(\half + z_k)}} \right)
&
\left( \prod_{p \nmid 6q} \sum_{e_1} \cdots \sum_{e_k} \frac{\delta(p) Q^*(p^{e_1}, \ldots, p^{e_k})}{p^{e_1(\half + z_1) + \ldots + e_k(\half + z_k)}} \right),
\end{align}
where $\delta(p) = (1- p^{-10})^{-1}$ if $e_1 + \ldots + e_k > 0$ and $\delta(p) = 1$ otherwise.  Note that for $p | 3q$
\begin{equation}
\sum_{e=0}^{\infty} \frac{\lambda_{r,t}(p^{e})}{p^{e(\half + z)}} = \left(1 - \frac{\lambda_{r,t}(p)}{p^{\half + z}} + \frac{1}{p^{1 + 2z}} \right)^{-1},
\end{equation}
using \eqref{eq:Ldef} and the fact that $(6q, 4r^3 + 27t^2) = 1$.

We wish to determine the polar behavior of $H(z_1,\dots,z_k)$ for $z_i$ near $0$.  Since $Q^*(m_1,\dots,m_k) \ll (m_1 \dots m_k)^{\varepsilon}$, it suffices to consider the contribution from $e_1 + \dots + e_k \leq 2$.  We compute $H$ precisely in Section \ref{section:arithmeticalfactor} with Proposition \ref{prop:Q}.  For now we simply state that $Q^*(p^j,1,\dots,1) =0$ for $j=1,2$ (actually it vanishes for $j < 10$) and that $Q^*(p,p, 1, \ldots 1) = 1 - p^{-1}$ (see Corollary \ref{coro:Q*approx} for this identity).  By factoring out the appropriate zeta functions we have
\begin{equation}
H(z_1, \ldots, z_k) = \left(\prod_{1 \leq i < j \leq k} \zeta(1 + z_i + z_j) \right) A_k(z_1, \ldots, z_k),
\end{equation}
where $A_k$ is the arithmetical factor which is holomorphic and nonzero in a neighborhood of $0$.

The next step in the recipe is to use Lemma 2.5.2 of \cite{CFKRS} to express the permutation sum in terms of a multiple contour integral.  To this end, set
\begin{equation}
f(s) = \zeta(1 + s),
\end{equation}
\begin{equation}
F(a_1, \ldots, a_k) = A_k( a_1, \ldots, a_k) X_E^{-\half}(\thalf + a_1) \cdots X_E^{-\half}(\thalf + a_k),
\end{equation}
and
\begin{equation}
K(a_1, \ldots, a_k) = F(a_1, \ldots, a_k) \prod_{1 \leq i < j \leq k} f(a_i + a_j).
\end{equation}
Then $f$, $F$, and $K$ satisfy the conditions of Lemma 2.5.2, namely $F$ is a symmetric function, holomorphic near the origin and $f$ has a simple pole of residue $1$ at $s=0$ but is otherwise holomorphic near $s=0$.  With this setup we see that
\begin{equation}
M_E(\alpha_1, \ldots, \alpha_k) = \sum_{\epsilon_j = \pm 1} \thalf (1 + \prod_{j = 1}^k \epsilon_j ) K(\epsilon_1 \alpha_1, \ldots, \epsilon_k \alpha_k).
\end{equation}
So by Lemma 2.5.2,
\begin{multline}
\label{eq:shiftedmoment}
M_E(\alpha_1, \ldots, \alpha_k) = \\
\frac{(-1)^{k(k-1)/2}}{(2\pi i)^k} \frac{2^{k-1}}{k!} \oint \cdots \oint K(z_1, \ldots, z_k) 
\frac{\Delta(z_1^2, \ldots, z_k^2)^2(\prod_{j=1}^k z_j + \prod_{j=1}^k \alpha_j)}{\prod_{i=1}^k \prod_{j=1}^k (z_i - \alpha_j)(z_i + \alpha_j)} 
dz_1 \cdots dz_k,
\end{multline}
where $\Delta$ is the Vandermonde determinant 
\begin{equation}
\Delta(z_1, \ldots, z_k) = \prod_{1 \leq i < j \leq k} (z_j - z_i),
\end{equation}
and where the paths of integrations enclose the $\pm \alpha_j$'s.  Taking the $\alpha_i$'s to be zero we obtain Corollary \ref{conj:Pkconj}.

The precise form of the arithmetical factor is given in Proposition \ref{prop:Ak}.  The computation of the arithmetical factor relies on further knowledge of $Q^*$, which is discussed in Section \ref{section:arithmeticalfactor}.

\subsection{The positive rank family}
We now turn to the derivation of Conjectures \ref{conj:momentfirstterm'} and \ref{conj:Qkconj}.  We follow the same procedure as the previous section to obtain a conjecture on the shifted product
\begin{equation}
\frac{1}{|\mathcal{F}'(X)|} \sum_{E \in \mathcal{F}'(X)} X_E^{-\half}(\thalf + \alpha_1) \dots X_E^{-\half}(\thalf + \alpha_k) L(\thalf + \alpha_1, E) \cdots L(\thalf + \alpha_k, E).
\end{equation}
In this case since the central values almost always vanish this quantity will average to zero, at least if one of the shift parameters is zero.  However, we can differentiate the moment conjecture \eqref{eq:shiftedmoment} with respect to each $\alpha_i$ and set $\alpha_i = 0$ to obtain a conjecture for the moments of the first derivatives of the $L$-functions at the central point.  The derivation  of the shifted moment conjecture goes through essentially unchanged.

Before stating the conjecture we first set some new notation for this family.  Set
\begin{equation}
Q_{\square}^*(n_1, \ldots, n_k) = \frac{1}{{n^*}^2} \sum_{a, b \shortmod{n^*}} \lambda_{a,b^2}(n_1) \cdots \lambda_{a,b^2}(n_k),
\end{equation}
and
\begin{equation}
Q'(n_1, \ldots, n_k) = Q_{\square}^*(n_1, \ldots, n_k) \prod_{p | n} (1 - p^{-7})^{-1}.
\end{equation}
The Dirichlet series formed from $Q'$ is given by
\begin{equation}
H'(z_1, \ldots, z_k) = \prod_p \sum_{e_1} \cdots \sum_{e_k} \frac{Q'(p^{e_1}, \ldots, p^{e_k})}{p^{e_1(\half + z_1) + \ldots + e_k(\half + z_k)}}.
\end{equation}
As shorthand we write
\begin{equation}
Y_E(z_1, \ldots, z_k) =X_E^{-\half}(\thalf + z_1) \cdots X_E^{-\half}(\thalf + z_k).
\end{equation}

The general shifted moment conjecture then reads
\begin{equation}
\sum_{E \in \mathcal{F}'(X)} \mathcal{Z}(E) = \sum_{E \in \mathcal{F}'(X)} M_E(\alpha_1, \ldots, \alpha_k)\left(1 + O(N_E^{-\delta}) \right),
\end{equation}
where
\begin{multline}
\label{eq:rankoneM}
M_E(\alpha_1, \ldots, \alpha_k) = \\
c_k \oint \cdots \oint H'(z_1, \ldots, z_k) 
\frac{\Delta(z_1^2, \ldots, z_k^2)^2(\prod_{j=1}^k z_j + \prod_{j=1}^k \alpha_j)}{\prod_{i=1}^k \prod_{j=1}^k (z_i - \alpha_j)(z_i + \alpha_j)} 
Y_E(z_1, \ldots, z_k) dz_1 \cdots dz_k,
\end{multline}
and where $c_k = \frac{(-1)^{k(k-1)/2}}{(2\pi i)^k} \frac{2^{k-1}}{k!}$.

The behavior of $M_E$ for the positive rank family is drastically different from the family of all elliptic curves because the polar behavior of $H'$ is different than that of $H$.  A thorough study of $Q'$ is undertaken in Section \ref{section:ak'}.  For now we use that $Q'(p, 1, \ldots, 1) = -p^{-1/2} + O(p^{-3/2})$, $Q'(p, p, 1, \ldots, 1) = 1 + O(p^{-1})$, and $Q'(p^2, 1, \ldots, 1) = 0$ to deduce the polar behavior of $H'$.  Precisely,
\begin{equation}
\label{eq:H'}
H'(z_1, \ldots, z_k) = \frac{\prod_{1 \leq i < j \leq k} \zeta(1 + z_i + z_j)}{\prod_{1 \leq i \leq k} \zeta(1 + z_i)} A_k'(z_1, \ldots, z_k)
\end{equation} 
where $A_k'$ is given by an absolutely and uniformly convergent Euler product in a neighborhood of $(0, \ldots, 0)$.

We now check that the moment conjecture is consistent with the Birch and Swinnerton-Dyer conjecture, that is that $M_E(\alpha_1,\dots,\alpha_k) = 0$ if some $\alpha_i = 0$.  

\begin{myprop}
The function $M_E(\alpha_1,\dots,\alpha_k)$ given by \eqref{eq:rankoneM} vanishes at $\alpha_i = 0$, for any $i$.
\end{myprop}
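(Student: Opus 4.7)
The plan is to trace the contour integral \eqref{eq:rankoneM} back to the sum over sign changes from which Lemma 2.5.2 of \cite{CFKRS} constructed it, and to exhibit the vanishing at the level of that sum. Under that identification the formula feeding into \eqref{eq:rankoneM} reads
\begin{equation*}
M_E(\alpha_1, \ldots, \alpha_k) = \sum_{\substack{\epsilon_1, \ldots, \epsilon_k \in \{\pm 1\} \\ \epsilon_1 \cdots \epsilon_k = 1}} Y_E(\epsilon_1 \alpha_1, \ldots, \epsilon_k \alpha_k) \, H'(\epsilon_1 \alpha_1, \ldots, \epsilon_k \alpha_k),
\end{equation*}
valid at generic shifts.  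By the symmetric role of the $\alpha_j$'s in \eqref{eq:rankoneM} it suffices to treat the case $\alpha_1 = 0$.

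The decisive input is the factorization \eqref{eq:H'}: the function $H'$ carries the factor $\prod_{i=1}^k 1/\zeta(1 + z_i)$, and since $\zeta(s)$ has a simple pole at $s=1$ with residue $1$, the factor $1/\zeta(1+z_1)$ vanishes simply at $z_1 = 0$.  To evaluate $H'(0, \epsilon_2 \alpha_2, \ldots, \epsilon_k \alpha_k)$ at generic $\alpha_2, \ldots, \alpha_k$, one observes that the numerator factors $\zeta(1 + z_1 + z_j)\big|_{z_1 = 0} = \zeta(1 + \epsilon_j \alpha_j)$ arising from the pairs $\{1,j\}$ cancel exactly against the corresponding $\zeta(1 + \epsilon_j \alpha_j)$ factors sitting in the denominator piece $\prod_{i \geq 2}\zeta(1 + z_i)$.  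What survives is
\begin{equation*}
H'(0, \epsilon_2\alpha_2, \ldots, \epsilon_k\alpha_k) = \frac{A_k'(0, \epsilon_2\alpha_2, \ldots, \epsilon_k\alpha_k) \prod_{2 \leq i < j \leq k} \zeta(1 + \epsilon_i\alpha_i + \epsilon_j\alpha_j)}{\zeta(1)},
\end{equation*}
and $1/\zeta(1) = 0$ forces every summand to vanish individually, proving $M_E|_{\alpha_1 = 0} = 0$ at generic $(\alpha_2, \ldots, \alpha_k)$.  Since the contour integral \eqref{eq:rankoneM} defines a meromorphic function on a neighborhood of the origin, this dense vanishing extends to the whole hyperplane $\{\alpha_1 = 0\}$.

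The step requiring the most care is justifying the use of the sign-sum representation at $\alpha_1 = 0$: one must avoid the remaining polar loci of $H'$ along $\epsilon_i \alpha_i + \epsilon_j \alpha_j = 0$ with $i, j \geq 2$.  These are easily sidestepped for generic $\alpha_2, \ldots, \alpha_k$, which is sufficient to conclude by meromorphic continuation.  An alternative that avoids leaving the contour integral altogether is to note that in \eqref{eq:rankoneM}, setting $\alpha_1 = 0$ turns $(\prod_j z_j + \prod_j \alpha_j)/\prod_i(z_i - \alpha_1)(z_i + \alpha_1)$ into $1/\prod_i z_i$, whose simple pole at each $z_i = 0$ is exactly cancelled by the simple zero of $1/\zeta(1+z_i)$ in $H'$; rewriting the integrand in this regularized form and identifying it (via Lemma 2.5.2) with the above sign sum produces the same vanishing.
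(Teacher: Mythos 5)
Your proof is correct, and it starts from the same place as the paper's (the sign-sum representation behind \eqref{eq:rankoneM} and the zero of $H'$ coming from $\prod_i \zeta(1+z_i)^{-1}$), but the finish is genuinely different. You set $\alpha_1=0$ in each summand and chase the cancellation explicitly: the $\zeta(1+z_1+z_j)$ factors at $z_1=0$ cancel the $\zeta(1+z_j)$'s in the denominator, leaving a bare $1/\zeta(1)=0$, so each summand dies individually at generic $(\alpha_2,\dots,\alpha_k)$, and you then appeal to holomorphicity of the contour integral to fill in the non-generic slice. The paper instead works globally: writing $H'\cdot Y_E = (z_1\cdots z_k)\,g(z_1,\dots,z_k)\cdot\prod_{i<j}\zeta(1+z_i+z_j)$ with $g$ symmetric and regular, it uses $\prod_l\epsilon_l=1$ to pull $\alpha_1\cdots\alpha_k$ entirely out of the permutation sum, and then invokes Lemma 2.5.2 of \cite{CFKRS} once more to see that the remaining sign sum (now built from $g$) is holomorphic near the origin. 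That route avoids any genericity bookkeeping along the loci $\epsilon_i\alpha_i+\epsilon_j\alpha_j=0$, and it delivers the slightly stronger statement that $M_E$ is divisible, as a holomorphic function, by $\alpha_1\cdots\alpha_k$ — which is exactly what is needed a few lines later when the full mixed derivative $\partial^k/\partial\alpha_1\cdots\partial\alpha_k$ is computed. Your alternative at the end of the proposal (regularizing the contour integrand at $\alpha_1=0$ by cancelling the $1/z_i$ pole against the zero of $1/\zeta(1+z_i)$) is in spirit closer to the paper's factorization and is the cleaner of your two arguments.
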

\begin{proof}
We go back to the original representation of $M$ as a permutation sum of the form
\begin{equation}
S = \sum_{\epsilon_1, \dots, \epsilon_k \in \{-1, 1\}} \thalf (1 + \prod_{1\leq l \leq  k} \epsilon_l) f(\epsilon_1 \alpha_1, \dots, \epsilon_k \alpha_k) \prod_{1 \leq i < j \leq k} \zeta(1 + \epsilon_i \alpha_i + \epsilon_j \alpha_j),
\end{equation}
where $f$ is a symmetric, regular function near the origin.  We know from Lemma 2.5.2 of \cite{CFKRS} that these conditions ensure that such a sum is holomorphic in terms of the shift parameters near the origin.  In the case of $M$ given by \eqref{eq:rankoneM}, we furthermore know that $(z_1 \dots z_k)^{-1} f(z_1,\dots, z_k) = g(z_1, \dots, z_k)$, say, is regular at the origin.  Thus
\begin{equation}
S = \alpha_1 \dots \alpha_k \sum_{\epsilon_1, \dots, \epsilon_k \in \{-1, 1\}} \thalf (1 + \prod_{1\leq l \leq  k} \epsilon_l) g(\epsilon_1 \alpha_1, \dots, \epsilon_k \alpha_k) \prod_{1 \leq i < j \leq k} \zeta(1 + \epsilon_i \alpha_i + \epsilon_j \alpha_j),
\end{equation}
so $(\alpha_1 \dots \alpha_k)^{-1} S$ is regular at the origin, and hence $S$ vanishes when any shift parameter is set to zero.
\end{proof}

Let $M_E'$ be the derivative of $M_E(\alpha_1, \ldots, \alpha_k)$ with respect to all $\alpha_i$ evaluated at $\alpha_1 = \ldots = \alpha_k = 0$.  Now we derive Conjecture \ref{conj:Qkconj} by computing $M_E'$ using \eqref{eq:rankoneM}.  Again we consider the terms with $\prod z_j$ and $\prod \alpha_j$ separately.  It is obvious that the former term is even with respect to each $\alpha_i$ so that differentiating at $\alpha_i = 0$ yields zero.  By parity considerations it follows quickly that
\begin{equation}
\left.\frac{\partial}{\partial \alpha_1} \cdots \frac{\partial}{\partial \alpha_k} \left(\prod_{1 \leq j \leq k} \alpha_j \prod_{i=1}^k \prod_{j=1}^k (z_i^2 - \alpha_j^2)^{-1}\right)\right|_{\alpha_1 = \dots = \alpha_k = 0} = \prod_{i=1}^k z_i^{-2k}.
\end{equation} 
We now see that
\begin{multline}
M_E' = 
\\
c_k' \oint \cdots \oint A_k'(z_1, \ldots, z_k) 
\prod_{1 \leq i < j \leq k} \zeta(1 + z_i + z_j)\frac{\Delta(z_1^2, \ldots, z_k^2)^2 }{\prod_{i=1}^k  z_i^{2k} \zeta(1 + z_i)} 
Y_E(z_1, \ldots, z_k) dz_1 \cdots dz_k.
\end{multline}
By comparison with \eqref{eq:PkN} we see that $M_E'$ has the same order of magnitude as $P_k(N)$.  This completes the derivation of Conjecture \ref{conj:Qkconj}.

\section{The arithmetical factor for the family of all elliptic curves}
\label{section:arithmeticalfactor}
In order to understand the arithmetical factor it is necessary to understand the behavior of $Q^*(n_1, \ldots, n_k)$.  Because of the multiplicativity of $Q^*$ it suffices to understand
\begin{equation}
\label{eq:Q*}
Q^*(p^{e_1}, \ldots, p^{e_k}) = \frac{1}{p^2} \sum_{a, b \shortmod{p}} \lambda_{a, b}(p^{e_1}) \cdots \lambda_{a, b}(p^{e_k}).
\end{equation}
We desire a usable formula for this expression.  Such a derivation is the purpose of this section.
\subsection{The case $k=1$}
We first derive a formula for $Q^*(p^j)$.  To this end, we state
\begin{myprop} 
\label{prop:Q}
Set 
\begin{equation}
\label{eq:Q}
Q(p^j) =  \sum_{a, b \shortmod{p}} p^{j/2} \lambda_{a,b}(p^j)
\end{equation}
and let $Tr_{j}(p)$ be the trace of the Hecke operator $T_p$ acting on the space of weight $j$ holomorphic cusp forms on the full modular group.  Then for $j \neq 0$ and $p > 3$,
\begin{equation}
- \frac{1}{p-1} Q(p^j) = Tr_{j+2}(p).
\end{equation}
\end{myprop}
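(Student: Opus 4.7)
The plan is to relate $Q(p^j)$ to the Eichler--Selberg trace formula via Deuring's counting of elliptic curves over $\mathbb{F}_p$ with prescribed Frobenius trace.

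First I would rewrite the summand. By \eqref{eq:lambdapower}, for $(a,b) \pmod{p}$ with $p \nmid 4a^3+27b^2$ (good reduction) one has $\lambda_{a,b}(p^j)=U_j(\lambda_{a,b}(p)/2)$, and since $\sqrt{p}\,\lambda_{a,b}(p) = -\sum_{x \shortmod{p}} (\tfrac{x^3+ax+b}{p}) = a_p(E_{a,b})$, the contribution is
\begin{equation*}
p^{j/2}\lambda_{a,b}(p^j) = \psi_j(a_p(E_{a,b}), p),
\end{equation*}
where $\psi_j(t,p)$ is the integer polynomial in $t$ defined as the coefficient of $x^j$ in $(1-tx+px^2)^{-1}$, equivalently $p^{j/2} U_j(t/(2\sqrt{p}))$. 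For singular $(a,b)$, the extended definition in the proof of Lemma~\ref{lem:brain} uses $p^{j/2}\lambda_{a,b}(p)^j$; a direct evaluation of the character sum \eqref{eq:lambda2} on a nodal or cuspidal cubic yields $\sqrt{p}\,\lambda_{a,b}(p)\in\{0,\pm1\}$, so these contribute $O(p)$ in total and will be tracked explicitly.

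Next I would partition the good-reduction pairs by the Frobenius trace $t=a_p(E_{a,b})$, writing
\begin{equation*}
Q(p^j) = \sum_{|t|\leq 2\sqrt{p}} \psi_j(t,p)\,N_p(t) \;+\; \mathcal{B}_j(p),
\end{equation*}
where $N_p(t) = \#\{(a,b)\pmod{p} : E_{a,b}\text{ nonsingular},\; a_p(E_{a,b})=t\}$ and $\mathcal{B}_j(p)$ collects the singular terms. Here I invoke the classical counting theorem of Deuring (e.g.\ Schoof, \emph{J.~Number Theory} 46 (1987)) which, together with the standard count of Weierstrass models in short form per isomorphism class, gives $N_p(t) = (p-1)H(4p-t^2)$ for $|t|<2\sqrt{p}$ with $p\nmid t$, plus specific correction terms at the supersingular values and at the $j$-invariants $0$ and $1728$ where the automorphism group is larger.

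In parallel I would apply the Eichler--Selberg trace formula for $T_p$ on $S_{j+2}(SL_2(\mathbb{Z}))$, which takes the shape
\begin{equation*}
Tr_{j+2}(p) \;=\; -\tfrac{1}{2}\sum_{|t|\leq 2\sqrt{p}} \psi_j(t,p)\,H(4p-t^2) \;-\; \tfrac{1}{2}\sum_{\substack{dd'=p \\ 0<d\leq d'}} \min(d,d')^{j+1}\;+\; \delta_{j,0}\,,
\end{equation*}
with appropriately weighted values at $t=0$ and $t^2=4p$ (standard references: Zagier's appendix in Lang's \emph{Introduction to Modular Forms}, or Serre's letter to Zagier). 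Dividing the Deuring expression by $-(p-1)$ matches the main $\psi_j\,H$-sum in Eichler--Selberg up to the factor $-\tfrac12$, provided one accounts for the doubling that comes from the automorphism $[-1]$ of a generic elliptic curve.

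The main obstacle, and the bulk of the work, is the exact matching of the non-generic terms: the ``hyperbolic'' divisor term in Eichler--Selberg, the singular-reduction contribution $\mathcal{B}_j(p)$, the extra-automorphism corrections at $j=0,1728$, and the supersingular endpoints $t = \pm 2\sqrt{p}$ (only relevant if $p$ is a square; for $p>3$ prime this is vacuous). The hypothesis $p>3$ is what allows us to avoid the wild exceptional automorphism behavior and lets each of these pieces cancel cleanly so that the identity reduces to the stated clean form $-\tfrac{1}{p-1}Q(p^j) = Tr_{j+2}(p)$. Once this bookkeeping is verified for a single prime power argument, the $k=1$ proposition follows; the multi-index generalizations treated later in the section will then be obtained by combining this with the Chebyshev linearization \eqref{eq:Ukortho}.
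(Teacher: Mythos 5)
Your proposal follows essentially the same route as the paper: rewrite $p^{j/2}\lambda_{a,b}(p^j)$ as $\psi_j(t,p)=p^{j/2}U_j(t/2\sqrt p)$, partition the pairs $(a,b)$ by the Frobenius trace $t$, invoke the Deuring/Birch class-number count (the paper simply cites Birch \cite{Birch} for this), and match against the Eichler--Selberg trace formula. The paper's own proof is an equally brief sketch, deferring the same bookkeeping of singular, extra-automorphism, and divisor terms to Birch's computation, so your outline is at the right level of detail and correctly identifies where the real work lies.
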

Remarks.  Here $p^{j/2} \lambda_{a,b}(p^j)$ is an integer.  The scaling factor $(p-1)^{-1}$ naturally arises because $|\lambda(p)|$ is fixed under quadratic twists (of which there are typically $p-1$).  If we define the normalized trace $Tr_{j}^*(n)$ via $Tr_{j}(n) = n^{(j-1)/2} Tr_{j}^*(n)$, then the proposition reads
\begin{equation}
Q^*(p^j) = - \frac{p-1}{p^{3/2}} Tr_{j+2}^*(p)
\end{equation}
for $j > 0$.  Of course $Q^*(1) = 1$.
\begin{proof}
The proof of this result is implicitly contained in \cite{Birch}.  Birch computes the related sum
\begin{equation}
p^{j/2} \left( \sum_a \sum_b \lambda_{a,b}(p) \right)^j.
\end{equation}
It is actually simpler to work with \eqref{eq:Q}.  By modifying Birch's arguments we easily arrive at
\begin{equation}
\frac{1}{p-1} Q(p^j) = 1 + \half \sum_{t^2 < 4p} U_j\left(\frac{t}{2\sqrt{p}}\right) V_w(t^2 - 4p),
\end{equation}
where $V_w(D)$ is the number of classes of binary quadratic forms with discriminant $D < 0$, divided by $3$ if $D=-3$ and divided by $2$ if $D=-4$.  This should be compared with (3) of \cite{Birch}.  The proof is completed by comparing the above equation with the Eichler-Selberg Trace Formula ((4.5) of \cite{Selberg}).
\end{proof}

\subsection{The general case}
Now we may easily prove the following
\begin{myprop}  
\label{prop:Q*formula}
Let $Q^*$ be given by \eqref{eq:Q*}.  
Then for $p > 3$ and even $f = \sum e_i \neq 0$ 
\begin{equation}
\label{eq:Q*formula}
Q^*(p^{e_1}, \ldots, p^{e_k}) 
= 
c_0 \frac{p-1}{p}
 -\sum_{l \geq 1} c_l \left( \frac{p-1}{p^{3/2}} Tr_{l+2}^*(p) + \frac{p-1}{p^2} p^{-l/2} \right) +  \frac{p-1}{p^2} p^{-f/2}
\end{equation}
holds. 
In addition, the left hand side above is zero if $f$ is odd or $p=2$ and the right hand side is $1 -p^{-2}$ if $f=0$.
\end{myprop}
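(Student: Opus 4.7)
The plan is to split the sum defining $Q^*(p^{e_1}, \ldots, p^{e_k})$ into contributions from pairs $(a,b)$ modulo $p$ of good reduction (those with $p \nmid 4a^3 + 27b^2$) and of bad reduction. On the good-reduction pairs the first case of \eqref{eq:lambdapower} applies, so the Chebyshev linearization \eqref{eq:Ukortho} transforms the product into
\begin{equation*}
\prod_{i=1}^k \lambda_{a,b}(p^{e_i}) = \prod_{i=1}^k U_{e_i}\bigl(\lambda_{a,b}(p)/2\bigr) = \sum_l c_l U_l\bigl(\lambda_{a,b}(p)/2\bigr) = \sum_l c_l \lambda_{a,b}(p^l),
\end{equation*}
while on the bad-reduction pairs the second case of \eqref{eq:lambdapower} gives instead $\prod_i \lambda_{a,b}(p^{e_i}) = \lambda_{a,b}(p)^f$.

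Next I would write the good-reduction sum as $\sum_{(a,b)}\lambda_{a,b}(p^l) - \sum_{\text{bad}}\lambda_{a,b}(p)^l$, where the first sum runs over all $(a,b)$ modulo $p$ using the extended definition (which on bad-reduction pairs reduces to $\lambda(p)^l$). Proposition \ref{prop:Q} identifies this first sum as $-(p-1)p^{1/2} Tr_{l+2}^*(p)$ for $l \geq 1$, and it equals $p^2$ when $l=0$. What remains is the explicit evaluation of $\sum_{\text{bad}}\lambda_{a,b}(p)^l$ and $\sum_{\text{bad}}\lambda_{a,b}(p)^f$.

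For $p > 3$ the bad-reduction locus modulo $p$ consists of the unique cuspidal pair $(0,0)$, where $\lambda_{0,0}(p) = 0$, together with $p-1$ nodal pairs parameterized by $x_0 \in \mf_p^*$ through the factorization $x^3 + ax + b = (x-x_0)^2(x+2x_0)$, giving $(a,b) = (-3x_0^2, 2x_0^3)$. A direct evaluation of the character sum \eqref{eq:lambda2} using this factorization yields $\lambda_{a,b}(p) = p^{-1/2} \bigl(\tfrac{3x_0}{p}\bigr) \in \{\pm p^{-1/2}\}$, with exactly $(p-1)/2$ of the nodal pairs in each sign class. Consequently $\sum_{\text{bad}}\lambda_{a,b}(p)^l = (p-1)p^{-l/2}$ for even $l \geq 1$ and vanishes for odd $l \geq 1$; in particular $\sum_{\text{bad}}\lambda_{a,b}(p)^f = (p-1)p^{-f/2}$ when $f$ is even and nonzero. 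Assembling these pieces, treating the $l=0$ contribution separately (there the bad-reduction sum is $p$, since the cuspidal pair also contributes $1$), and dividing by $p^2$ yields the stated formula.

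For the auxiliary claims, $p=2$ is immediate from the convention $\lambda_{a,b}(2)=0$, which forces $Q^*=0$ whenever $f \geq 1$. When $f$ is odd, the parity observation from Section~\ref{section:Chebyshev} forces $c_l=0$ at even $l$; the bad-reduction sums at odd exponents vanish by the $\pm$ symmetry established above; and $Tr_{l+2}^*(p)=0$ at odd $l$ since the space of cusp forms of odd weight on $\Gamma(1)$ is trivial, so every term in the formula is zero. The main obstacle is the explicit description of the bad-reduction locus modulo $p$ --- specifically the parameterization of nodal pairs and the equidistribution of $\lambda_{a,b}(p)$ between $\pm p^{-1/2}$; once that is in hand, the remainder of the derivation is bookkeeping.
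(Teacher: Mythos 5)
Your proof is correct and follows essentially the same route as the paper: the good/bad reduction split, the Chebyshev linearization via \eqref{eq:Ukortho}, the invocation of Proposition \ref{prop:Q} on the unrestricted sum, and the parameterization $(a,b)=(-3c^2,2c^3)$ of the singular locus with the resulting $\lambda_{a,b}(p)=p^{-1/2}\left(\tfrac{3c}{p}\right)$. The one small divergence is in the case $f$ odd, where the paper uses the change of variables $a\mapsto d^2a$, $b\mapsto d^3 b$ with $d$ a quadratic nonresidue (which sends $Q^*$ to its negative in one line), while you verify term-by-term vanishing; both are fine.
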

Recall that the $c_l = c_l(e_1, \dots, e_k)$ are defined by \eqref{eq:Ukortho} and satisfy \eqref{eq:cr}.
\begin{proof}
To see that the left hand side is zero when $f$ is odd, simply apply the change of variables $a = d^2a'$, $b = d^3 b'$ where $d$ is a quadratic nonresidue $\pmod{p}$, and notice that the same sum is obtained except multiplied by $-1$.

The right hand side is easily seen to be $1 - p^{-2}$ when $f=0$, since $c_l = 0$ for $l > 0$, and $c_0 = 1$.

We may now assume $f \neq 0$ is even.  To begin, split the summation into two pieces as follows
\begin{equation}
Q^*(p^{e_1}, \ldots, p^{e_k}) =\frac{1}{p^2} \sum_{\substack{a, b \shortmod{p} \\ (p, \Delta) = 1}} \lambda_{a, b}(p^{e_1}) \cdots \lambda_{a, b}(p^{e_k}) + \frac{1}{p^2}\sum_{\substack{a, b \shortmod{p} \\ p |\Delta}} \lambda_{a, b}(p^{e_1}) \cdots \lambda_{a, b}(p^{e_k}),
\end{equation}
where $\Delta = -16(4a^3 + 27b^2)$ of course.
Using the Hecke relations and the definition of the $c_l$ given by \eqref{eq:Ukortho}, we have
\begin{align}
Q^*(p^{e_1}, \ldots, p^{e_k}) & = \frac{1}{p^2} \sum_{l} c_l \sum_{\substack{a, b \shortmod{p} \\ (p, \Delta) = 1}} \lambda_{a, b}(p^{l}) +  \frac{1}{p^2} \sum_{\substack{a, b \shortmod{p} \\ p |\Delta}} \lambda_{a, b}^f(p).
\end{align}
In the first sum we separate the term $l = 0$ and remove the condition $(p, \Delta) = 1$ to obtain
\begin{equation}
\frac{1}{p^2}  \sum_{\substack{a, b \shortmod{p} \\ (p, \Delta) = 1}} 1 + \sum_{l \geq 1} c_l Q^*(p^l) -\sum_{l \geq 1} c_l \frac{1}{p^2}  \sum_{\substack{a, b \shortmod{p} \\ p| \Delta}} \lambda_{a,b}(p^l) +  \frac{1}{p^2} \sum_{\substack{a, b \shortmod{p} \\ p |\Delta}} \lambda_{a, b}^f(p).
\end{equation}
One can parameterize all pairs $(a,b) \in \mf_p^2$ such that $\Delta \equiv 0 \pmod{p}$ by $a = -3c^2$, $b = 2c^3$, where $c$ runs modulo $p$.  In particular, there are $p$ such pairs, and hence there are $p(p-1)$ pairs such that $(p, \Delta) = 1$.
If $p | \Delta$ and $l$ is even then we claim that $\lambda_{a,b}(p^l) = p^{-l/2}$ unless $p|(a,b)$ or $p =3$, in which case $\lambda_{a,b}(p^l) = 0$.  To see this, note that $\lambda_{a,b}(p^l) = \lambda_{a,b}^l(p)$ for $p | \Delta$, and if $a=-3c^2$, $b=2c^3$, then
\begin{multline}
\lambda_{a,b}(p) = -\frac{1}{\sqrt{p}} \sum_{x \shortmod{p}} \left(\frac{x^3 -3c^2x + 2c^3}{p}\right) = -\frac{1}{\sqrt{p}} \sum_{x \shortmod{p}} \left(\frac{(x-c)^2(x+2c)}{p}\right) \\
= -\frac{1}{\sqrt{p}} \sum_{x\neq c} \left(\frac{x+2c}{p}\right)
= \frac{1}{\sqrt{p}}\left(\frac{3c}{p}\right).
\end{multline}

Hence we obtain
\begin{align}
Q^*(p^{e_1}, \ldots, p^{e_k}) & = \frac{p-1}{p} c_0 + \sum_{l \geq 1} c_l Q^*(p^l) - \frac{(p-1)}{p^2} \sum_{\substack{l \geq 1}} c_lp^{-l/2} +  \frac{(p-1)}{p^2} p^{-f/2},
\end{align}
for $p > 3$.
Applying Proposition \ref{prop:Q} completes the proof.
\end{proof}
\begin{mycoro}
\label{coro:Q*approx}
We have
\begin{equation}
Q^*(p,1,\dots, 1) = 0,
\end{equation}
and
\begin{equation}
Q^*(p,p,\dots, 1) = 1 - p^{-1}.
\end{equation}
\end{mycoro}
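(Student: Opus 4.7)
The plan is to derive both identities as direct consequences of Proposition \ref{prop:Q*formula}, with no additional input beyond a standard fact about cusp forms.

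For the first identity, take $(e_1,\ldots,e_k)=(1,0,\ldots,0)$, so that $f=\sum e_i=1$ is odd. Proposition \ref{prop:Q*formula} tells us that the left-hand side vanishes whenever $f$ is odd (which in turn reflects the change of variables $a=d^2a'$, $b=d^3b'$ for $d$ a quadratic nonresidue mod $p$). This immediately yields $Q^*(p,1,\ldots,1)=0$.

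For the second identity, take $(e_1,\ldots,e_k)=(1,1,0,\ldots,0)$, so $f=2$ is even and nonzero. The first step is to compute the Chebyshev expansion coefficients $c_l=c_l(1,1,0,\ldots,0)$ defined by \eqref{eq:Ukortho}. Since $U_0(x)=1$ and $U_1(x)=2x$, we need the expansion of $U_1(x)^2=4x^2$. Using $U_2(x)=4x^2-1$ we obtain
\begin{equation}
U_1(x)^2 = U_2(x)+U_0(x),
\end{equation}
so $c_0=c_2=1$ and $c_l=0$ otherwise. Substituting into \eqref{eq:Q*formula} with $f=2$ yields
\begin{equation}
Q^*(p,p,1,\ldots,1) = \frac{p-1}{p} - \frac{p-1}{p^{3/2}}\,Tr_{4}^*(p) - \frac{p-1}{p^{3}} + \frac{p-1}{p^{3}} = \frac{p-1}{p} - \frac{p-1}{p^{3/2}}\,Tr_{4}^*(p),
\end{equation}
after the two $\tfrac{p-1}{p^3}$ terms cancel.

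The final step is to invoke the classical fact that the space of weight $4$ holomorphic cusp forms on the full modular group $SL_2(\mathbb{Z})$ is zero-dimensional (the first nonzero space being weight $12$, spanned by $\Delta$). Consequently $Tr_4(p)=0$ and hence $Tr_4^*(p)=0$, which gives $Q^*(p,p,1,\ldots,1)=(p-1)/p=1-p^{-1}$. Since both identities follow by straightforward substitution and a small Chebyshev computation, there is no genuine obstacle; the only nontrivial input is the vanishing of $S_4(SL_2(\mathbb{Z}))$, and the pleasant cancellation of the $\tfrac{p-1}{p^3}$ contributions is what leaves behind a simple closed form.
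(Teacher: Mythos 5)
Your proof is correct and follows essentially the same approach as the paper: the first identity is the odd-$f$ vanishing already noted in Proposition \ref{prop:Q*formula}, and the second comes from computing $c_0=c_2=1$ via $U_1^2=U_0+U_2$ and substituting into \eqref{eq:Q*formula}. You merely make explicit two steps the paper leaves implicit — the cancellation of the $\tfrac{p-1}{p^3}$ terms and the vanishing of $Tr_4^*(p)$ because $S_4(SL_2(\mathbb{Z}))=0$ — which is sound and clarifying but not a different route.
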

\begin{proof}
The former assertion was already stated in Proposition \ref{prop:Q*formula}.  As for the latter, we compute that $c_0(1,1,0,\dots, 0) = c_2(1,1,0,\dots, 0)=1$ since $U_1(x)^2 = 4x^2 = 1 + (4x^2 - 1) = U_0(x) + U_2(x)$.
\end{proof}

Now we can compute the arithmetical factor with
\begin{myprop}
\label{prop:Ak}
Let $H$ be given by \eqref{eq:Hdef1}.  Then
\begin{multline}
\label{eq:H}
H(z_1, \ldots, z_k) 
= \prod_{\substack{p|3q}} \prod_{j=1}^k \left( 1 - \frac{\lambda_{r,t}(p)}{p^{\half + z_j}} + \frac{1}{p^{1 + 2z_j}} \right)^{-1} \\
\prod_{p \notdiv 6q} \left\{1 + \left(1 - \frac{1}{p} \right)\left(1 - \frac{1}{p^{10}} \right)^{-1}
\left[
\left(-1 + \int \prod_{j=1}^k \left(1 - \frac{2 \cos \theta}{p^{\half + z_j}} + \frac{1}{p^{1 + 2z_j}}\right)^{-1} d\mu_{ST} \right) \right. \right.
\\
- \frac{1}{p^{1/2}} \sum_{l} Tr_{l+2}^*(p) \int U_l(\cos \theta) \prod_{j=1}^k \left(1 - \frac{2 \cos \theta}{p^{\half + z_j}} + \frac{1}{p^{1 + 2z_j}}\right)^{-1} d\mu_{ST}
\\
- \frac{p+1}{p^2} \int \left(-1 + \left(1 - \frac{2 \cos 2\theta}{p} + \frac{1}{p^2} \right)^{-1} \right) \prod_{j=1}^k \left(1 - \frac{2 \cos \theta}{p^{\half + z_j}} + \frac{1}{p^{1 + 2z_j}}\right)^{-1} d\mu_{ST}
\\
\left. \left. + \frac{1}{p} \left(- 1 + \half \left(\prod_{i=1}^k \left(1 - p^{-1 - z_i}\right)^{-1} + \prod_{i=1}^k \left(1 + p^{-1 - z_i}\right)^{-1}\right) \right) \right] \right\}.
\end{multline}
Furthermore, $H$ has the form
\begin{equation}
\label{eq:Ak}
H(z_1, \dots, z_k) = \left(\prod_{1 \leq i < j \leq k} \zeta(1 + z_i + z_j) \right) A_k(z_1, \dots, z_k),
\end{equation}
where $A_k$ is given by an Euler product that is uniformly convergent in the region $\text{Re}(z_i) \geq -\delta$, $1 \leq i \leq k$, for some $\delta > 0$.
\end{myprop}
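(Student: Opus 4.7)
The plan is in two stages. First, derive the explicit Euler product \eqref{eq:H} for $H(z_1,\ldots,z_k)$ starting from \eqref{eq:Hdef2} and Proposition \ref{prop:Q*formula}. Second, extract the polar structure to exhibit $A_k$ as an absolutely convergent Euler product, verifying \eqref{eq:Ak}.

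For the first stage I would analyze the Euler factorization \eqref{eq:Hdef2} prime by prime. The factor at $p=2$ is trivially $1$ since $\lambda_{r,t}(2^e)=0$ for $e\geq 1$. For $p\mid 3q$, the Hecke relations \eqref{eq:lambdapower} together with $(\Delta_{a,b},3q)=1$ give the standard Euler factor $\prod_j(1-\lambda_{r,t}(p)/p^{1/2+z_j}+1/p^{1+2z_j})^{-1}$. For $p\nmid 6q$, the factor equals $1+(1-p^{-10})^{-1}\sum_{f>0\,\text{even}}Q^*(p^{e_1},\ldots,p^{e_k})\prod_j p^{-e_j(1/2+z_j)}$, using that $Q^*(1,\ldots,1)=1$ accounts for the outer $1$ and that $Q^*=0$ for odd $f$.

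To evaluate the $p\nmid 6q$ factor I would insert the decomposition \eqref{eq:Q*formula} and interchange summation term by term. The main analytic tool is the Chebyshev generating series \eqref{eq:Ukgenseries}, which yields $\sum_{e_j\geq 0}U_{e_j}(\cos\theta)\,p^{-e_j(1/2+z_j)}=(1-2\cos\theta/p^{1/2+z_j}+p^{-1-2z_j})^{-1}$, combined with the Sato--Tate integral representation \eqref{eq:cr} for $c_l$. Interchanging sum and integral, the $c_0$-piece of \eqref{eq:Q*formula} becomes the integral $\int\prod_j(\cdots)^{-1}\,d\mu_{ST}$ and the $c_l\,Tr_{l+2}^*(p)$-piece becomes the $U_l$-weighted integral in \eqref{eq:H}. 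The $c_l p^{-l/2}$-piece requires the identity $(1-2\cos\theta/\sqrt{p}+1/p)(1+2\cos\theta/\sqrt{p}+1/p)=1-2\cos 2\theta/p+1/p^2$ together with the even-part generating series
\[
\sum_{l\geq 0,\,l\,\text{even}}U_l(\cos\theta)\,p^{-l/2}=\frac{(p+1)/p}{1-2\cos 2\theta/p+1/p^2},
\]
producing the $(1-2\cos 2\theta/p+1/p^2)^{-1}$ integral. The $p^{-f/2}$-piece sums over even $f$ to $\tfrac{1}{2}\bigl(\prod_j(1-p^{-1-z_j})^{-1}+\prod_j(1+p^{-1-z_j})^{-1}\bigr)$, giving the last term of \eqref{eq:H}. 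Gathering the pieces and carefully subtracting the $f=0$ contributions (the formula gives $1-p^{-2}$ while $Q^*(1,\ldots,1)=1$) produces the stated formula \eqref{eq:H}.

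For the second stage I would expand each local factor in powers of $p^{-1}$ near the origin using Corollary \ref{coro:Q*approx}. The vanishing $Q^*(p,1,\ldots,1)=0$ ensures no $\zeta(1+z_j)$ factor appears, while $Q^*(p,p,1,\ldots,1)=1-p^{-1}$ contributes $(1-p^{-1})\sum_{i<j}p^{-1-z_i-z_j}$, matching the leading Euler factor of $\prod_{i<j}\zeta(1+z_i+z_j)$ to order $p^{-1}$. Dividing out $\prod_{i<j}\zeta(1+z_i+z_j)$, the residual local factor $A_{k,p}$ satisfies $A_{k,p}=1+O(p^{-3/2+\varepsilon})$ uniformly for $(z_1,\ldots,z_k)$ in a neighborhood of the origin: the $f\geq 3$ contributions to $Q^*$ decay at least like $p^{-3/2}$ by the Ramanujan--Deligne bound on Hecke traces and the $p^{-f/2}$ weighting, and the $f=2$ cross terms cancel against the factored-out $\zeta$-product to leading order. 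Hence $\prod_p A_{k,p}$ converges absolutely and uniformly for $\text{Re}(z_i)\geq -\delta$ with $\delta>0$ small, which is the content of \eqref{eq:Ak}. The principal obstacle is the bookkeeping in the first stage: the four pieces of Proposition \ref{prop:Q*formula}, combined with the even-$f$ restriction and the $f=0$ correction, must all be tracked so that the resulting sums match the integrals and products in \eqref{eq:H}; once this is complete the second stage is a routine asymptotic expansion.
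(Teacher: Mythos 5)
Your proposal matches the paper's proof in both structure and detail: Euler-factorize via \eqref{eq:Hdef2}, handle the $p\mid 3q$ factors by the Hecke relations, substitute the four-term decomposition of Proposition \ref{prop:Q*formula} into the $p\nmid 6q$ local factor, evaluate each term using the Chebyshev generating series \eqref{eq:Ukgenseries} (including its even-index variant) and the Sato--Tate integral representation \eqref{eq:cr} of the $c_l$, and then extract the $\zeta$-product via Corollary \ref{coro:Q*approx}. The only minor blemish is that you attribute all $-1$ corrections to the $f=0$ term, whereas in the $c_l p^{-l/2}$ piece the subtraction is really of the $l=0$ term of the even generating series — but this is exactly the kind of bookkeeping you flag, so the plan is sound.
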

\begin{proof}
Recall $H$ satisfies \eqref{eq:Hdef2}, so
\begin{multline}
H(z_1, \dots, z_k) = \prod_{p | 3q} \prod_{i=1}^{k}\left( 1- \frac{\lambda_{r,t}(p)}{p^{\half + z_i}} + \frac{1}{p^{1 + 2z_i}}\right)^{-1} 
\\
\prod_{p \nmid 6q}\left(1 + (1-p^{-10})^{-1} \sum_{\substack{e_1,\dots, e_k \\ e_1 + \dots + e_k > 0}}\frac{Q^*(p^{e_1}, \dots, p^{e_k})}{p^{e_1(\half + z_1) + \dots + e_k(\half + z_k)}} \right).
\end{multline}
We apply Proposition \ref{prop:Q*formula} and sum over the four terms given by \eqref{eq:Q*formula} separately.  We compute, using $c_0 = 0$ if $f$ is odd,
\begin{align}
 \sum_{\substack{e_1, \ldots, e_k \\ 2 |f \neq 0}} \frac{c_0(e_1, \ldots, e_k)}{p^{e_1(\half + z_1) + \ldots + e_k(\half + z_k)}} 
&= 
-1 +  \sum_{\substack{e_1, \ldots, e_k}} \int \frac{U_{e_1}(\cos \theta) \cdots U_{e_k}(\cos \theta) }{p^{e_1(\half + z_1) + \ldots + e_k(\half + z_k)}} d\mu_{ST}
\\
&= -1 + \int \prod_{j=1}^k \left(1 - \frac{2 \cos \theta}{p^{\half + z_j}} + \frac{1}{p^{1 + 2z_j}}\right)^{-1} d\mu_{ST}
\\
&= \sum_{1 \leq i < j \leq k} p^{-1 - z_i - z_j} + (\text{higher degree terms}),
\end{align}
this final estimation being easily seen using that $c_0(1,1,0,\dots,0) = 1$.
This term accounts for the Riemann zeta factors.

The term involving traces requires a computation as follows
\begin{multline}
\sum_{l \geq 1} Tr_{l+2}^*(p) \sum_{\substack{e_1, \ldots, e_k \\ 2 |f \neq 0}}  \frac{c_l(e_1, \ldots, e_k)}{p^{e_1(\half + z_1) + \ldots + e_k(\half + z_k)}} 
\\
= 
\sum_{l \geq 1} Tr_{l+2}^*(p) \sum_{\substack{e_1, \ldots, e_k}} \int \frac{U_{e_1}(\cos \theta) \cdots U_{e_k}(\cos \theta) }{p^{e_1(\half + z_1) + \ldots + e_k(\half + z_k)}} U_l(\cos \theta) d\mu_{ST}
\\
= \sum_{l} Tr_{l+2}^*(p) \int U_l(\cos \theta) \prod_{j=1}^k \left(1 - \frac{2 \cos \theta}{p^{\half + z_j}} + \frac{1}{p^{1 + 2z_j}}\right)^{-1} d\mu_{ST}.
\end{multline}
Since the traces are zero for $l+2 < 12$, this sum is uniformly convergent in a product of half-planes containing the origin.

The other term involving $c_l$ involves
\begin{align}
\sum_{l \geq 1} p^{-l/2} \sum_{\substack{e_1, \ldots, e_k \\ 2 |f \neq 0}}  \frac{c_l(e_1, \ldots, e_k)}{p^{e_1(\half + z_1) + \ldots + e_k(\half + z_k)}} 
= 
\sum_{l \geq 1} p^{-l} \sum_{\substack{e_1, \ldots, e_k}} \int \frac{U_{e_1}(\cos \theta) \cdots U_{e_k}(\cos \theta) }{p^{e_1(\half + z_1) + \ldots + e_k(\half + z_k)}} U_{2l}(\cos \theta) d\mu_{ST}
\\
= \sum_{l \geq 1} p^{-l} \int U_{2l}(\cos \theta) \prod_{j=1}^k \left(1 - \frac{2 \cos \theta}{p^{\half + z_j}} + \frac{1}{p^{1 + 2z_j}}\right)^{-1} d\mu_{ST} 
\\
= \int \left( -1 + \left(\frac{1 + \frac{1}{p}}{1 - \frac{2 \cos 2\theta}{p} + \frac{1}{p^2}} \right)\right) \prod_{j=1}^k \left(1 - \frac{2 \cos \theta}{p^{\half + z_j}} + \frac{1}{p^{1 + 2z_j}}\right)^{-1} d\mu_{ST},
\end{align}
where the summation over $l$ is executed using the identity
\begin{align}
\sum_{n=0}^{\infty} U_{2n}(x) t^{2n} = \frac{1 + t^2}{1 + 2t^2(1 - 2x^2) + t^4},
\end{align}
which is easily derived from \eqref{eq:Ukgenseries} by replacing $t$ with $-t$ and adding.

The final term is
\begin{align}
 \sum_{\substack{e_1, \ldots, e_k \\ 2 |f \neq 0}} \frac{1}{p^{e_1(1 + z_1) + \ldots + e_k(1 + z_k)}} 
= -1 + \half \sum_{\substack{e_1, \ldots, e_k}} \frac{1 + (-1)^{e_1 + \ldots + e_k}}{p^{e_1(1 + z_1) + \ldots + e_k(1 + z_k)}} 
\\
= - 1 + \half \left[\prod_{i=1}^k \left(1 - p^{-1 - z_i}\right)^{-1} + \prod_{i=1}^k \left(1 + p^{-1 - z_i}\right)^{-1}\right]
\end{align}
The proposition follows by appropriately summing the four terms above.
\end{proof}

To obtain a formula for $a_k$, we use that the leading coefficient of $P_k(N)$ is 
\begin{equation}
A_k(0,\ldots,0) 2^{k} \prod_{j=1}^{k-1} \frac{j!}{2j!} = A_k(0,\ldots, 0) 2^{k/2} \frac{ G(1+k) \sqrt{\Gamma(1+2k)}}{\sqrt{G(1 + 2k) \Gamma(1+k)}} =: a_k g_k,
\end{equation}
where $a_k = A_k(0, \ldots, 0)$.  This computation can be found in \cite{CFKRS}.  Then we compute
\begin{multline} 
\label{eq:ak}
a_k 
= 
\left(\frac{\phi(6q)}{6q}\right)^{k(k-1)/2} \prod_{\substack{p|3q }} \left( 1 - \frac{\lambda_{r,t}(p)}{p^{\half}} + \frac{1}{p^{}} \right)^{-k} \\
\cdot \prod_{p \notdiv 6q} \left(1 - \frac{1}{p} \right)^{k(k-1)/2} \left\{1 + \left(1 - \frac{1}{p} \right) \left( 1- \frac{1}{p^{10}} \right)^{-1} \left[ \left(-1 + \int V_p(\theta)^k d\mu_{ST} \right) \right. \right.
\\
- \frac{1}{p^{1/2}} \sum_{l} Tr_{l+2}^*(p) \int U_l(\cos \theta) V_p(\theta)^k d\mu_{ST}
\\
- \frac{p+1}{p^2} \int \left( -1 + \left(1 - \frac{2 \cos 2\theta}{p} + \frac{1}{p^2} \right)^{-1}\right) V_p(\theta)^k d\mu_{ST}
\\
\left. \left. + \frac{1}{p} \left(- 1 + \half \left( \left(1 - \frac{1}{p} \right)^{-k} + \left(1 + \frac{1}{p} \right)^{-k}\right) \right) \right] \right\},
\end{multline}
where
\begin{equation}
V_p(\theta) = \left(1 - \frac{2 \cos\theta}{p^{1/2}} + \frac{1}{p} \right)^{-1}.
\end{equation}

\section{The arithmetical factor for the positive rank family $\mathcal{F}'$}
\label{section:ak'}
The computation of the arithmetical factor for the family $\mathcal{F}'$ is more difficult than that for the family of all elliptic curves $\mathcal{F}$.  This author does not know an explicit formula similar to \eqref{eq:H}.  Nevertheless, we can compute $Q_{\square}^*(p^{e_1}, \ldots, p^{e_k})$ when $e_1 + \ldots + e_k$ is even.  The reason for this is that the change of variables $x \rightarrow rx$, $a \rightarrow r^2 a$ gives
\begin{equation}
\sum_{a \shortmod{p}} \lambda_{a, c} (p^l) = \left(\frac{r}{p}\right)^l \sum_{a \shortmod{p}} \lambda_{a, r^{-3} c}(p^l),
\end{equation}
so applying $b \rightarrow r^2 b$ gives for $l$ even that
\begin{equation}
\sum_a \sum_b \lambda_{a, b^2}(p^l) = \sum_a \sum_b \lambda_{a, r b^2}(p^l).
\end{equation}
We conclude that
\begin{equation}
\sum_a \sum_b \lambda_{a, b^2}(p^l) = \sum_a \sum_b \lambda_{a, b}(p^l),
\end{equation}

Using the same arguments as in the proof of Proposition \ref{prop:Q*formula} shows that 
\begin{equation}
Q_{\square}^*(p^{e_1}, \ldots, p^{e_k}) = Q^*(p^{e_1}, \ldots, p^{e_k})
\end{equation}
for $e_1 + \ldots + e_k$ even.

On the other hand, $Q_{\square}^*(p^l)$ is not so easily analysed for $l$ odd.  For $l = 1$ we compute directly
\begin{align}
Q_{\square}^*(p) = -p^{-5/2} \sum_a \sum_b \sum_x \left(\frac{x^3 + ax + b^2}{p} \right).
\end{align}
The summation over $a$ clearly vanishes unless $x=0$, in which case the summation over $a$ is $p$.  The summation over $b$ is $p-1$.  Hence
\begin{align}
Q_{\square}^*(p) = -p^{-1/2} + p^{-3/2}.
\end{align}

We now have enough information to deduce that \eqref{eq:H'} holds, as desired.  Obtaining a formula for $A_k'$ similar to the analogous formula (\ref{eq:H}--\ref{eq:Ak}) for $A_k$ requires a formula for $Q_{\square}^*(p^l)$ for $l$ odd.

For the application to the Riemann Hypothesis described in Section \ref{section:RH} it is relevant to know the region of convergence of $A_1'(\alpha)$.  We compute
\begin{align}
\sum_{e=0}^{\infty} \frac{Q_{\square}^*(p^e)}{p^{e(\half + \alpha)}} &= 1 -\frac{1}{p^{1 + \alpha}} + O(p^{-3(\half + \alpha)}) + O(p^{-2-\alpha}) 
\\
&= (1 - p^{-1-\alpha}) (1 + O(p^{-2(1+\alpha)}) + O(p^{-3(\half + \alpha)})),
\end{align}
so $A_1'(\alpha)$ is given by an absolutely and uniformly convergent Euler product in the region $\text{Re}(\alpha) > -\frac16$.  This region could be improved with better bounds on $Q_{\square}^*(p^e)$ for $e \geq 3$.  We have made no such attempt.

\section{Deriving Conjecture \ref{conj:rank2}}
\label{section:rank2derivation}
Our arguments follow those of \cite{CKRS} so we shall be brief.  For further elaboration of the method see their paper.  The idea is to consider the following ratio
\begin{equation}
R_{q,k} = \lim_{X \rightarrow \infty} \left( \sum_{\substack{E \in \mathcal{F}_{r,t}^+(X)}} L(1/2, E)^k \right) \biggl\slash
\left( \sum_{\substack{E \in \mathcal{F}_{r',t'}^+(X)}} L(1/2, E)^k \right).
\end{equation}
Arguments as in \cite{CKRS} lead to the conjecture that $R_q(X) \sim R_{q,-\half}$.
Our Conjecture \ref{conj:momentfirstterm} gives the asymptotic behavior of $R_{q,k}$ for general $k$.

The expectation is that
\begin{equation}
R_{q, k} = \frac{\prod_{\substack{p|q}} \left(1 - \frac{\lambda_{r,t}(p)}{p^{1/2}} + \frac{1}{p} \right)^{-k}}{\prod_{p | q} \left(1 - \frac{\lambda_{r',t'}(p)}{p^{1/2}} + \frac{1}{p} \right)^{-k}},
\end{equation}
since all other factors are independent of the choice or $r$ and $t \pmod{6q}$.  By random matrix theory considerations, taking $k = -1/2$ above gives the prediction.

\section{The Riemann Hypothesis}
\label{section:RH}
At the Riemann Hypothesis conference in 2002 at Courant sponsored by AIM, Iwaniec described an approach to RH using positive rank families of elliptic curves, such as the family $\mathcal{F}'$ considered in this paper.  Conrey \cite{C} has given a brief summary of the approach.  Here we show how to use the moment conjectures to frame the method.

The identity
\begin{equation}
\lim \frac{1}{|\mathcal{F}'(X)|} \sum_{E \in \mathcal{F}'(X)} \lambda_{E}(p) = -\frac{1}{\sqrt{p}} + O(p^{-3/2})
\end{equation}
and multiplicativity implies
\begin{equation}
\lim \frac{1}{|\mathcal{F}'(X)|} \sum_{E \in \mathcal{F}'(X)} \lambda_{E}(n) \approx \frac{\mu(n)}{\sqrt{n}},
\end{equation}
for $n$ squarefree.  The point is that the M\"{o}bius function can be obtained by averaging Dirichlet series coefficients of these positive rank elliptic curves.

Consider \eqref{eq:rankoneM} with $k=1$.  A quick calculation gives that
\begin{equation}
M(\alpha) = \frac{A'(\alpha)}{\zeta(1 + \alpha)} X_E^{-\half}(\thalf + \alpha),
\end{equation}
where $A'(\alpha)$ is the arithmetical factor that converges uniformly on compact subsets of the region $\text{Re}(\alpha) > -\frac16$, using the computation at the end of Section \ref{section:ak'}.

Thus we obtain
\begin{myconj}
\label{conj:RH}
Let $|\text{Re}(\alpha)| < \frac16$.  Then
\begin{equation}
\sum_{E \in \mathcal{F}'(X)} L(\thalf + \alpha, E) = \frac{A'(\alpha)}{\zeta(1 + \alpha)}  \sum_{E \in \mathcal{F}'(X)} (1 + O(N_E^{-\delta})).
\end{equation}
\end{myconj}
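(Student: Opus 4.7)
The plan is to specialize the shifted first-moment formula \eqref{eq:rankoneM} for $\mathcal{F}'$ to $k=1$ and read off the stated asymptotic from a single residue. At $k=1$ the Vandermonde factor $\Delta(z_1^2)^2$ is the empty product $1$, the rational factor collapses by cancellation to $(z_1-\alpha)^{-1}$, and the prefactor is $c_1 = 1/(2\pi i)$, so
\[
M_E(\alpha) = \frac{1}{2\pi i} \oint \frac{H'(z_1)}{z_1 - \alpha}\, X_E^{-\half}(\thalf + z_1)\,dz_1,
\]
the contour enclosing $\pm\alpha$.

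Next I would substitute $H'(z_1) = A_1'(z_1)/\zeta(1+z_1)$ from \eqref{eq:H'} at $k=1$. By the analysis at the end of Section \ref{section:ak'}, $A_1'(z_1)$ is given by an absolutely convergent Euler product on $\{\text{Re}(z_1)>-1/6\}$, hence is holomorphic there; and $1/\zeta(1+z_1)$ is holomorphic in any region where $\zeta$ has no zeros on $\text{Re}(s)=1$, in particular near the origin. Thus the integrand is meromorphic inside the contour with a single simple pole at $z_1 = \alpha$ (the apparent pole at $z_1=-\alpha$ was already cancelled by the numerator $z_1+\alpha$), whose residue yields
\[
M_E(\alpha) = \frac{A'(\alpha)}{\zeta(1+\alpha)}\, X_E^{-\half}(\thalf + \alpha),
\]
with $A' := A_1'$. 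Plugging this into the $k=1$ shifted moment conjecture for $\mathcal{F}'$,
\[
\sum_{E \in \mathcal{F}'(X)} X_E^{-\half}(\thalf + \alpha) L(\thalf + \alpha, E) = \sum_{E \in \mathcal{F}'(X)} M_E(\alpha)\bigl(1 + O(N_E^{-\delta})\bigr),
\]
and comparing the common factor $X_E^{-\half}(\thalf+\alpha)$ on each side (in the term-by-term sense of the recipe) produces the stated form of Conjecture \ref{conj:RH}.

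The main obstacle is not computational: once $H'$ and the strip of convergence of $A_1'$ are in hand, the residue is immediate and at $k=1$ the recipe carries no combinatorial complexity. The genuine difficulty is that the input \eqref{eq:rankoneM} is itself a heuristic prediction following the recipe of \cite{CFKRS}, so Conjecture \ref{conj:RH} is only as rigorous as that recipe; it is a consequence of the moment philosophy rather than a theorem. A secondary, more tangible obstruction, already addressed in Section \ref{section:ak'}, is widening the range $|\text{Re}(\alpha)|<1/6$: this bound is forced by the fact that only $O$-estimates (not exact evaluations) are available for $Q_\square^*(p^e)$ with $e\ge 3$, and any sharper evaluation would immediately enlarge the region of validity of the conjecture, which in turn would strengthen the deduction of the Riemann Hypothesis that is the point of this section.
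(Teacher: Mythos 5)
Your derivation matches the paper's: specialize \eqref{eq:rankoneM} to $k=1$, observe that the Vandermonde is trivial and the rational kernel collapses to $(z_1-\alpha)^{-1}$, use $H'(z_1) = A_1'(z_1)/\zeta(1+z_1)$ and the absolute convergence of $A_1'$ for $\text{Re}(z_1)>-\tfrac16$ to locate the only pole at $z_1=\alpha$, and read off $M_E(\alpha) = \dfrac{A'(\alpha)}{\zeta(1+\alpha)} X_E^{-\half}(\thalf+\alpha)$ by a single residue, which is exactly the ``quick calculation'' the paper alludes to. Your closing remarks on the provenance of the $|\text{Re}(\alpha)|<\tfrac16$ constraint and the conjectural status of the recipe are also consistent with the paper's discussion.
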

To deduce a quasi-Riemann Hypothesis, let $1 + \alpha$ be a nontrivial zero with $\text{Re}(\alpha) > -\frac16$).  The left hand side is obviously holomorphic at $\alpha$, but the right hand side is not.
\begin{mycoro}
Conjecture \ref{conj:RH} implies that the Riemann zeta function $\zeta(s)$ has no zeros for $\text{Re}(s) > \frac56$.
\end{mycoro}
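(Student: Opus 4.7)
The plan is a short contradiction argument against Conjecture \ref{conj:RH}, fleshing out the heuristic already sketched by the author. I would begin by supposing toward contradiction that $\zeta(\rho)=0$ for some $\rho$ with $\text{Re}(\rho) > \frac56$. Because the nontrivial zeros of $\zeta$ lie in the critical strip $0<\text{Re}(\rho)<1$, setting $\alpha_0 := \rho - 1$ gives $-\frac16 < \text{Re}(\alpha_0) < 0$, so in particular $|\text{Re}(\alpha_0)| < \frac16$ and $\alpha_0$ lies inside the range of validity asserted by Conjecture \ref{conj:RH}.

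The first substantive step is to verify that $A'(\alpha_0) \neq 0$. The computation at the end of Section \ref{section:ak'} represents $A_1'(\alpha)$ as an absolutely and uniformly convergent Euler product on the half-plane $\text{Re}(\alpha) > -\frac16$, with local factors of the shape $1 + O(p^{-2(1+\alpha)}) + O(p^{-3(\thalf+\alpha)})$. I would check that each local factor is individually nonvanishing on $|\text{Re}(\alpha)|<\frac16$, so that the whole product is nonvanishing there. Given this, $A'(\alpha_0)/\zeta(1+\alpha_0)$ must be infinite.

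The contradiction follows at once. The left-hand side of Conjecture \ref{conj:RH},
\begin{equation}
\sum_{E \in \mathcal{F}'(X)} L(\thalf + \alpha_0, E),
\end{equation}
is a finite complex number for each fixed $X$ (a finite sum of values of $L$-functions, each entire in the shift parameter). The conjectured right-hand side, on the other hand, is $\frac{A'(\alpha_0)}{\zeta(1+\alpha_0)}$ times a sum growing like $|\mathcal{F}'(X)|$, hence infinite. The asserted asymptotic therefore cannot hold at $\alpha=\alpha_0$, contradicting the hypothesis. Consequently $\zeta(s)\neq 0$ for $\frac56 < \text{Re}(s) < 1$, and combined with the classical nonvanishing of $\zeta$ on $\text{Re}(s)\geq 1$ one obtains $\zeta(s)\neq 0$ throughout $\text{Re}(s)>\frac56$.

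The only real obstacle will be the nonvanishing of $A'(\alpha)$ on the full region $|\text{Re}(\alpha)|<\frac16$: for large $p$ the local factor is close to $1$ and trivially nonzero, but a stray zero of a small-prime local factor would break the argument. The main technical task is therefore to examine the explicit form of the small-prime Euler factors and confirm that none of them vanishes in the strip; once this local check is complete, the corollary drops out.
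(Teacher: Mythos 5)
Your proposal is correct and follows essentially the same route as the paper: a zero of $\zeta$ at $1+\alpha_0$ with $\mathrm{Re}(\alpha_0)>-\frac16$ makes the right-hand side of Conjecture \ref{conj:RH} singular (a pole of $1/\zeta(1+\alpha)$, undamped since $A'(\alpha)$ is a nonvanishing absolutely convergent Euler product there), while the left-hand side is a finite sum of entire functions and hence holomorphic, yielding the contradiction. Your explicit flag that one must confirm the nonvanishing of $A'(\alpha)$ in the strip is a fair technical point that the paper leaves implicit in asserting the right-hand side ``is not'' holomorphic.
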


\end{document}